\newtheorem{theorem}{Theorem}[section]
\newtheorem{corollary}[theorem]{Corollary}
\newtheorem{lemma}[theorem]{Lemma}
\newtheorem{proposition}[theorem]{Proposition}
\newtheorem{definition}{Definition}[section]
\newtheorem{step}{Step}
\numberwithin{equation}{section}
\numberwithin{equation}{section}
\begin{document}
\title[Infinitely many solutions for a critical and fractional Kirchhoff problem]
{Infinitely many solutions for\\ a critical Kirchhoff type problem\\ involving a fractional operator}
\author[A.~Fiscella]{Alessio Fiscella}
\address{Departamento de Matem\'atica,
Universidade Estadual de Campinas, IMECC,
Rua S\'ergio Buarque de Holanda, 651, Campinas, SP CEP 13083--859 BRAZIL}
\email{fiscella@ime.unicamp.br}

\keywords{Kirchhoff type problems, fractional Laplacian, nonlocal problems, critical nonlinearities, variational methods, Krasnoselskii's genus.\\
\phantom{aa} {\it 2010 AMS Subject Classification}. Primary:  35J60, 35R11, 49J35;  Secondary: 35A15, 45G05, 35S15.}

\begin{abstract}
In this paper we deal with a Kirchhoff type problem driven by a nonlocal fractional 
integrodifferential operator $\mathcal L_K$, that is
$$ -M(\left\|u\right\|^2)\mathcal L_Ku=\lambda f(x,u)\left[\int_\Omega F(x,u(x))dx\right]^r+\left|u\right|^{2^* -2}u\quad \mbox{in }\Omega,\qquad u=0\quad\mbox{in }\mathbb{R}^{n}\setminus\Omega,
$$
where $\Omega$ is an open bounded subset of $\mathbb{R}^n$, $M$ and $f$ are continuous functions, $\left\|\cdot\right\|$ is a functional norm, $F(x,u(x))=\int^{u(x)}_0 f(x,\tau)d\tau$, $2^*$ is a fractional Sobolev exponent, $\lambda$ and $r$ are real parameters. For this problem we prove the existence of infinitely many solutions, through a suitable truncation argument and exploiting the genus theory introduced by Krasnoselskii.
\end{abstract}

\maketitle
\section{Introduction}
In the last years, the interest towards nonlinear Kirchhoff type problems has grown more and more,
thanks in particular to their intriguing analytical structure due to the presence of the nonlocal Kirchhoff function $M$
which makes the equation no longer a pointwise identity.
In the present paper we consider the problem
\begin{align}\label{P}
-&M\left(\|u\|^2\right)\mathcal L_Ku=\lambda f(x,u)\left[\int_\Omega F(x,u(x))dx\right]^r\!+\!\left|u\right|^{2^* -2}\!u\quad\mbox{in } \Omega,\nonumber\\
&u=0\quad\mbox{in } \mathbb{R}^{n}\setminus\Omega,\\
&\|u\|^2=\iint_{\mathbb{R}^{2n}}\!\!\!\! |u(x)-u(y)|^2K(x-y)dxdy,\quad F(x,u(x))=\int^{u(x)}_0 f(x,\tau)d\tau\nonumber
\end{align}
where $\Omega\subset \mathbb R^n$ is a bounded domain, $n>2s$,  with $s\in(0,1)$,  the number
$2^*=2n/(n-2s)$ is the critical exponent of the fractional Sobolev space $H^s(\mathbb{R}^n)$,
$M$ and $f$ are two continuous functions whose properties will be introduced
later, $\lambda>0 $ and $r\geq0$ are real parameters.

The main nonlocal fractional operator $\mathcal L_K$ is defined for any $x\in\mathbb{R}^{n}$ as follows
\begin{equation*}
\mathcal L_K\varphi(x)=\frac{1}{2}\int_{\mathbb{R}^{n}}(\varphi(x+y)+\varphi(x-y)-2\varphi(x))K(y)dy,
\end{equation*}
along any $\varphi\in C^\infty_0(\mathbb R^n)$, where the kernel $K:\mathbb{R}^{n}\setminus\left\{0\right\}\rightarrow \mathbb R^+$ is a
\textit{measurable} function with the properties that
\begin{equation}\label{K1}
\mbox{$mK\in L^1(\mathbb R^n)$, where $m(x)=\min \{|x|^2, 1\}$};
\end{equation}
\begin{equation}\label{K2}
\mbox{there exists }\theta>0
\mbox{ such that }K(x)\geq \theta |x|^{-(n+2s)}\mbox{ for any }x\in \mathbb R^n \setminus\{0\}.
\end{equation}
A typical example for $K$ is given by $K(x)=\frac{2}{c(n,s)}\left|x\right|^{-(n+2s)}$, for which $\mathcal L_K$ coincides with the fractional Laplace operator $-(-\Delta)^s$,
which may be defined by the Riesz potential, for any $x\in\mathbb R^n$, as
\begin{equation*}
-(-\Delta)^s \varphi(x)=
\frac{c(n,s)}{2}
\int_{\mathbb{R}^{n}}\frac{\varphi(x+y)+\varphi(x-y)-2\varphi(x)}{|y|^{n+2s}}dy,
\end{equation*}
along any $\varphi\in C^\infty_0(\mathbb R^n)$, where $c(n,s)>0$ is the normalizing constant given by
\begin{equation}\label{cns}
c(n,s)=\left(\int_{\mathbb R^{n}}\frac{1-\cos(\xi_1)}{|\xi|^{n+2s}}\,d\xi\right)^{-1}
\end{equation}
as defined in \cite{VP} (see this paper and the references therein for further details on fractional Laplacian).

Concerning the Kirchhoff function, we suppose that $M:\mathbb{R}^+_0 \to\mathbb{R}^+_0$ is a \textit{continuous} function verifying
\begin{equation}\label{m1}
\mbox{there exists }m_0> 0\mbox{ such that }M(t)\geq m_0\mbox{ for any }t\in\mathbb{R}^+_0;
\end{equation}
\begin{equation}\label{m2}
\begin{aligned}
&\mbox{there exists }\sigma\in [2,2^*)\mbox{ such that }\frac{1}{2}\mathscr M(t)-\frac{1}{\sigma}M(t)t\geq0\mbox{ for any }t\in\mathbb{R}^+_0,\\
&\mbox{where }\mathscr M(t)=\displaystyle\int_0^t M(\tau)d\tau.
\end{aligned}
\end{equation}
A typical Kirchhoff function verifying \eqref{m1} and \eqref{m2} is given by $M(t)=m_0+m_1 t^{\sigma/2}$, with a suitable constant $m_1\geq0$.

Also, for problem \eqref{P} we assume that $f:\overline{\Omega}\times\mathbb{R}\rightarrow\mathbb{R}$ is a \textit{continuous} function satisfying
\begin{equation}\label{f1}
f(x,-t)=-f(x,t)\mbox{ for any }(x,t)\in\overline{\Omega}\times\mathbb{R};
\end{equation}
\begin{equation}\label{f2}
\begin{aligned}
&\mbox{there exist constants }a_1,\,a_2>0 \mbox{ and }1<q<\frac{2}{r+1}\mbox{ such that}\\
&a_1t^{q-1}\leq f(x,t)\leq a_2t^{q-1}\mbox{ for any }x\in\overline{\Omega}\mbox{ and }t\in[0,\infty).
\end{aligned}
\end{equation}
A very simple model for the nonlinearity $f$ is given by $f(x,t)=a(x)\left|t\right|^{q-2}t$, with $a:\overline{\Omega}\rightarrow\mathbb R$ a continuous and non--negative map. 

We note that $f(x,0)=0$ for any $x\in\overline{\Omega}$, by \eqref{f1}, hence the function $u\equiv0$ is a solution for problem \eqref{P}. However, in the main result of this paper, stated below, we prove that problem \eqref{P} admits infinitely many weak solutions.

\begin{theorem} \label{main} Let $s\in(0,1)$, $n>2s$, $\Omega$ be an open bounded subset of $\mathbb R^n$ and $r\geq0$. Let $K:\mathbb R^n\setminus\left\{0\right\}\rightarrow\mathbb R^+$ be a function satisfying \eqref{K1}--\eqref{K2}, let $M:\mathbb R^+_0\rightarrow\mathbb R^+_0$ satisfy \eqref{m1}--\eqref{m2} and let $f:\overline{\Omega}\times\mathbb R\rightarrow\mathbb R$ verify \eqref{f1}--\eqref{f2}.

Then, there exists $\overline{\lambda}>0$ such that for any $\lambda\in(0,\overline{\lambda})$ problem~\eqref{P} has infinitely many weak solutions.
\end{theorem}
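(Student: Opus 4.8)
The plan is to realize weak solutions of \eqref{P} as critical points of the even energy functional
$$
J_\lambda(u)=\frac12\mathscr M\!\left(\|u\|^2\right)-\frac{\lambda}{r+1}\left[\int_\Omega F(x,u)\,dx\right]^{r+1}-\frac{1}{2^*}\int_\Omega|u|^{2^*}\,dx,
$$
defined on the Hilbert solution space $X_0$ naturally associated with $\mathcal L_K$ under the Dirichlet condition $u=0$ in $\mathbb R^n\setminus\Omega$ and endowed with the norm $\|\cdot\|$. The oddness of $f$ in \eqref{f1} makes $F(x,\cdot)$ even, so $J_\lambda$ is even with $J_\lambda(0)=0$; it is of class $C^1$, and its critical points solve \eqref{P}. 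Since the embedding $X_0\hookrightarrow L^{2^*}(\Omega)$ is not compact, $J_\lambda$ is unbounded below and need not satisfy the Palais--Smale condition, so a direct symmetric minimax argument fails. I would therefore combine a truncation of the critical term with Krasnoselskii genus theory, in the spirit of Garc\'ia Azorero--Peral.

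First, I would establish a uniform lower estimate. Using \eqref{m1} (so that $\mathscr M(t)\ge m_0 t$), the upper bound in \eqref{f2} together with the H\"older and Sobolev inequalities, and the best fractional Sobolev constant $S$, I would derive
$$
J_\lambda(u)\ge \frac{m_0}{2}\|u\|^2-\lambda A\,\|u\|^{q(r+1)}-B\,\|u\|^{2^*}=:g(\|u\|)
$$
for suitable constants $A,B>0$. Because $q(r+1)<2<2^*$ by \eqref{f2}, the purely critical part $h(t)=\tfrac{m_0}{2}t^2-Bt^{2^*}$ has a positive maximum at an explicit $\bar t>0$ and vanishes at some $t_1>\bar t$. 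I would fix a smooth non-increasing cut-off $\chi\colon[0,\infty)\to[0,1]$ with $\chi\equiv1$ on $[0,\bar t]$ and $\chi\equiv0$ on $[t_1,\infty)$, and replace $J_\lambda$ by the truncated functional
$$
\widetilde J_\lambda(u)=\frac12\mathscr M\!\left(\|u\|^2\right)-\frac{\lambda}{r+1}\left[\int_\Omega F(x,u)\,dx\right]^{r+1}-\frac{\chi(\|u\|)}{2^*}\int_\Omega|u|^{2^*}\,dx .
$$
By construction $\widetilde J_\lambda=J_\lambda$ on $\{\|u\|\le\bar t\}$; moreover, for $\|u\|\ge t_1$ the critical term disappears and, since $q(r+1)<2$, one gets $\widetilde J_\lambda(u)\ge\frac{m_0}{2}\|u\|^2-\lambda A\|u\|^{q(r+1)}\to+\infty$, so $\widetilde J_\lambda$ is coercive, bounded below and even. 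Choosing $\overline\lambda$ small, I would also show that every $u$ with $\widetilde J_\lambda(u)<0$ satisfies $\|u\|<\bar t$, whence each critical point of $\widetilde J_\lambda$ at a negative level is in fact a critical point of $J_\lambda$, i.e.\ a weak solution of \eqref{P}.

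The technical heart is compactness: I would prove that $\widetilde J_\lambda$ satisfies the Palais--Smale condition at every negative level. A Palais--Smale sequence is bounded by coercivity, hence converges weakly in $X_0$, strongly in every $L^p(\Omega)$ with $p<2^*$, and a.e.; the lower bound $M\ge m_0$ from \eqref{m1}, the subcritical convergence of the $F$-term, and a Brezis--Lieb analysis of $\int_\Omega|u_j|^{2^*}$ then upgrade weak to strong convergence. Here the coupling through the nonlocal coefficient $M(\|u_j\|^2)$ and the differentiated cut-off $\chi'(\|u_j\|)$ must be handled carefully, and the inequality \eqref{m2} (with $\sigma<2^*$) is exactly what controls the Kirchhoff contribution; confining the analysis to the region $\|u\|\le t_1$, where the critical term is active, keeps the possible concentration below the critical Sobolev threshold. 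This step is where I expect the main difficulty to lie.

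Finally, I would produce the geometry required by the genus theorem. Using the lower bound $f(x,t)\ge a_1 t^{q-1}$ in \eqref{f2}, so that $F(x,u)\ge \tfrac{a_1}{q}|u|^q$, and the equivalence of all norms on a finite-dimensional subspace, for each $k\in\mathbb N$ I would select a $k$-dimensional subspace $V_k\subset X_0$ and a radius $\rho_k>0$ with $\widetilde J_\lambda<0$ on $\{u\in V_k:\|u\|=\rho_k\}$; this is possible precisely because the concave exponent $q(r+1)$ is strictly below $2$, so the negative $F$-term dominates near the origin. Setting $c_k=\inf_{A\in\Gamma_k}\sup_{u\in A}\widetilde J_\lambda(u)$, where $\Gamma_k$ collects the symmetric compact sets of Krasnoselskii genus at least $k$, the evenness of $\widetilde J_\lambda$, its boundedness below, the Palais--Smale condition and the above geometry yield, through the genus version of the symmetric minimax principle, a sequence of negative critical values $c_k\uparrow0$. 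By the confinement property these are critical values of $J_\lambda$, and the pairs $\pm u$ they produce furnish infinitely many distinct weak solutions of \eqref{P}, completing the proof for every $\lambda\in(0,\overline\lambda)$.
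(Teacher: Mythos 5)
Your overall strategy is the same as the paper's: weak solutions are critical points of the even functional $\mathcal J_\lambda$; the critical term is truncated by a cut-off of the norm (as in \cite{AP}) to get an even, coercive, bounded-below functional; for $\lambda$ small, negative-energy points are confined in a ball where the truncated and original functionals agree; spheres in finite-dimensional subspaces give sublevel sets of arbitrarily large genus (the paper's Lemma \ref{genus}); and a genus minimax produces infinitely many critical points (the paper's Lemmas \ref{negative} and \ref{deformation} together with Proposition \ref{prop} --- your appeal to a Clark-type theorem, including the unneeded claim $c_k\uparrow0$, is the same mechanism in packaged form).

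The genuine gap is the Palais--Smale condition at levels $c\le0$, which is the technical heart of the matter (the paper's Lemma \ref{palais}, proved via the concentration--compactness principle of \cite{PP}) and which you explicitly leave open; worse, the mechanism you propose for it is wrong. Confinement in $\left\{\|u\|\le t_1\right\}$, ``where the critical term is active,'' does not rule out concentration. Indeed, if $\frac{1}{2^*}\|u\|^{2^*}_{2^*}\le B\|u\|^{2^*}$ is the Sobolev bound behind $h(t)=\frac{m_0}{2}t^2-Bt^{2^*}$, then for a bounded Palais--Smale sequence $u_j\rightharpoonup u$, setting $v_j=u_j-u$ and $b^2=\lim\|v_j\|^2$, the relation $\langle\mathcal J'_\lambda(u_j),u_j-u\rangle\to0$ together with \eqref{m1} and the Brezis--Lieb lemma gives $m_0b^2\le M\big(\lim\|u_j\|^2\big)b^2=\lim\|v_j\|^{2^*}_{2^*}\le 2^*Bb^{2^*}$, hence either $b=0$ or $b\ge\bar t=(m_0/(2^*B))^{1/(2^*-2)}$. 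Since $\bar t<t_1$, a Palais--Smale sequence with norms just under $t_1$ can still carry a bubble, so your norm bound proves nothing. What actually closes the argument is an \emph{energy} estimate, not a norm estimate: as in the paper, testing $\mathcal J_\lambda(u_j)-\frac{1}{\sigma}\langle\mathcal J'_\lambda(u_j),u_j\rangle$ and using \eqref{m2} shows that any bubble forces $c\ge k_3-\lambda^{2^*/(2^*-q(r+1))}k_4$, with $k_3>0$ universal (see \eqref{constants} and \eqref{clevel}); the $\lambda$-dependent loss $k_4$, produced by the sublinear term, is absent from your sketch and is exactly what imposes a second smallness condition on $\overline\lambda$, namely $k_3-\overline\lambda^{\,2^*/(2^*-q(r+1))}k_4>0$ as in Lemma \ref{localpalais}. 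Alternatively, your Brezis--Lieb route can be repaired by confining to $\left\{\|u\|<\bar t\right\}$ rather than $\left\{\|u\|\le t_1\right\}$: then $b\le\bar t$, and the borderline case $b=\bar t$, $u=0$ is excluded since \eqref{m2} yields $c=\frac12\mathscr M(\bar t^{\,2})-\frac{1}{2^*}M(\bar t^{\,2})\bar t^{\,2}\ge\left(\frac{1}{\sigma}-\frac{1}{2^*}\right)m_0\bar t^{\,2}>0$, contradicting $c<0$.

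A second, related flaw: the confinement statement itself (``every $u$ with $\widetilde J_\lambda(u)<0$ satisfies $\|u\|<\bar t$'') is unjustified for your cut-off, because your transition region $[\bar t,t_1]$ reaches the zero $t_1$ of $h$. Near $t_1$ the only estimate you have, $\widetilde J_\lambda(u)\ge h(\|u\|)-\lambda A\|u\|^{q(r+1)}$ (which uses merely $\chi\le1$), is negative for every $\lambda>0$, so it cannot exclude negative-energy points with norm in the transition region; whether such points exist depends on the interplay between the decay of $\chi$ and that of $h$, which you do not control. The paper avoids this by construction: $R_1$ is fixed where $h(R_1)>0$ strictly, $\lambda^*$ is defined in \eqref{stella} so that $\mathcal G_{\lambda^*}(R_1)>0$, and the plateau of the cut-off ends at $R_0$, the largest zero of $\mathcal G_{\lambda^*}$ in $(0,R_1)$; only with this choice does $\mathcal I_\lambda(u)\le0$ force $\|u\|\le R_0$, where $\mathcal I_\lambda$ and $\mathcal J_\lambda$ coincide. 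Both defects are fixable, but as written the proposal does not prove the theorem.
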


The proof of Theorem \ref{main} is mainly based on a variational approach combined with some classical results from the genus theory introduced by Krasnoselskii. In order to apply these results, we need a truncation argument which allow us to control from below the Euler--Lagrange functional associated to problem \eqref{P}. Furthermore, as usual in elliptic problems
involving critical nonlinearities, we must pay attention to the lack of compactness at critical
level $L^{2^*}(\Omega)$. To overcome this difficulty, we exploit a concentration-compactness
principle proved, in the fractional framework, in \cite[Theorem 5]{PP}.

The interest in studying problems like \eqref{P} relies not only on mathematical purposes but also on their significance in real models. For example, in the Appendix of paper \cite{FV} the authors  construct a stationary Kirchhoff variational problem which models, as a special
significant case, the nonlocal aspect of the tension arising from nonlocal measurements
of the fractional length of the string.

Several recent papers are focused both on theoretical aspects and applications
related to nonlocal fractional models. Concerning Kirchhoff type problems, in \cite{AFP, FV} the authors prove existence results for problem \eqref{P} when $r=0$.

In \cite{FV} they consider an increasing Kirchhoff function $M$ satisfying \eqref{m1} and, as in our paper, to solve the lack of compactness they use \cite[Theorem 5]{PP}. However, to apply the concentration--compactness principle in \cite{PP}, instead of \eqref{K1}--\eqref{K2} they use the following stronger condition
\begin{equation*}
\mbox{there exists }\theta >0\mbox{ such that }\theta\left|x\right|^{-(n+2s)}\leq K(x)\leq\frac{1}{\theta}\left|x\right|^{-(n+2s)}
\mbox{ for any }x\in\mathbb{R}^{n}\setminus\left\{0\right\}.
\end{equation*}
In our paper, considering the basic notions on the fractional Laplace operator $(-\Delta)^s$, we are able to avoid the requirement of this assumption.

In \cite{AFP}, the authors consider a degenerate Kirchhoff function $M$, namely satisfying $M(0)=0$, for problem \eqref{P}. In this case, the approach based on the application of \cite[Theorem 5]{PP} does not work. Thus, they prove the compactness of the related Euler--Lagrange functional by using the celebrated Brezis \& Lieb lemma. However, as well explained in Section \ref{sec palais}, this approach allow them to prove the existence of just one critical value for the related functional and thus to get just one solution for \eqref{P}.

For existence of multiple solutions it is worth mentioning paper \cite{FBS}, where they study the following subcritical problem
$$ -M(\left\|u\right\|^2)\mathcal L_Ku=\lambda f(x,u)\left[\int_\Omega F(x,u(x))dx\right]^r\quad \mbox{in }\Omega,\qquad u=0\quad\mbox{in }\mathbb{R}^{n}\setminus\Omega
$$
where $M$ could also be in a degenerate setting. While in \cite{CC} and \cite{FS} the authors consider still a critical Kirchhoff problem, similar to \eqref{P}, involving respectively the $p(x)$--Laplacian operator and the classical Laplace operator. As in our paper, in \cite{CC, FBS, FS} the approach is mainly based on a combination of variational and topological techniques. 

Inspired by the above articles, in this paper we would like to investigate the existence of infinitely many solutions for problem \eqref{P}.

The paper is organized as follows. In Section~\ref{sec preliminary} we discuss the variational formulation of the problem and introduce some topological notions.
In Section~\ref{sec palais} we prove the Palais--Smale condition for the functional related to \eqref{P}.
In Section \ref{sec truncation} we introduce a truncation argument for our functional.
In Section \ref{sec main} we prove Theorem~\ref{main}.

\section{Preliminaries}\label{sec preliminary}

\textit{Throughout this paper we assume $s\in(0,1)$, $n>2s$, $\Omega$ is an open bounded subset of $\mathbb R^n$, \eqref{K1}--\eqref{K2}, \eqref{m1}--\eqref{m2} and \eqref{f1}--\eqref{f2} without further mentioning}.

Problem \eqref{P} has a variational structure and the suitable functional space where finding solutions will denoted by $Z$
and it was introduced in \cite{F} in the following way.

First, let $X$ be the linear space of Lebesgue measurable functions $u:\mathbb{R}^n\to\mathbb{R}$ whose restrictions to $\Omega$ belong to $L^2(\Omega)$ and such that
	\[\mbox{the map }
(x,y)\mapsto (u(x)-u(y))^2 K(x-y) \mbox{ is in } L^1\big(Q,dxdy\big)
\]
where $Q=\mathbb{R}^{2n}\setminus\left({\mathcal C}\Omega\times{\mathcal C}\Omega\right)$ and $\mathcal C\Omega=\mathbb{R}^n\setminus\Omega$.
The space $X$ is endowed with the norm
\begin{equation}\label{norma}
\left\|u\right\|_{X}=\Big(\|u\|^2_2+\iint_Q |u(x)-u(y)|^2K(x-y)dxdy\Big)^{1/2}.
\end{equation}
It is easy to see that bounded and Lipschitz functions belong to $X$, thus $X$ is not reduced to
$\left\{0\right\}$ (see \cite{sv3,sv2} for further details on the space $X$).

The functional space $Z$ denotes the closure of $C^{\infty}_{0}(\Omega)$ in $X$.
The scalar product defined for any $\varphi$, $\phi\in Z$ as
\begin{equation}\label{scal}
\langle \varphi, \phi\rangle_Z=\iint_{Q} (\varphi(x)-\varphi(y))(\phi(x)-\phi(y))K(x-y) dxdy
\end{equation}
makes $Z$ a Hilbert space. The norm
\begin{equation}\label{normaz}
\|u\|_{Z}=\Big(\iint_Q |u(x)-u(y)|^2K(x-y)dxdy\Big)^{1/2}
\end{equation}
is equivalent to the usual one defined in \eqref{norma}, as proved in \cite[Lemma~2.1]{F}.
Note that in \eqref{norma}--\eqref{normaz} the integrals can be extended to all $\mathbb{R}^{n}$ and $\mathbb{R}^{2n}$, since  $u=0$ a.e.
in $\mathcal C\Omega$. {\it From now on, in order to simplify the notation, we denote $\langle \cdot, \cdot\rangle_Z$ and
$\|\cdot\|_{Z}$ by $\langle \cdot, \cdot\rangle$ and $\|\cdot\|$, respectively.}

The weak formulation of \eqref{P} is as follows. We say that $u\in Z$ is a weak {\em solution} of~\eqref{P}~if
\begin{equation}\label{wf}
\begin{alignedat}2
M\left(\|u\|^2\right)\langle u, \varphi\rangle&=
\lambda\displaystyle{\int_\Omega f(x, u(x))\varphi(x)dx}\left[\int_\Omega F(x,u(x))dx\right]^r\\
&+\int_\Omega \left|u(x)\right|^{2^*-2}u(x)\varphi(x)dx
\end{alignedat}
\end{equation}
for all $\varphi \in Z$.

Thanks to our assumptions on $\Omega$, $M$, $f$ and $K$, all the integrals in \eqref{wf} are well defined if $u$, $\varphi\in Z$.
Indeed by \eqref{f1} and \eqref{f2}, since $f$ is continuous, we easily get
\begin{equation}\label{f'1}
a_1\left|t\right|^q\leq f(x,t)t\leq a_2\left|t\right|^q\quad\mbox{for any }(x,t)\in\overline{\Omega}\times\mathbb R,
\end{equation}
and
\begin{equation}\label{f'2}
\quad\frac{a_1}{q}\left|t\right|^q\leq F(x,t)\leq\frac{a_2}{q}\left|t\right|^q\quad\mbox{for any }(x,t)\in\overline{\Omega}\times\mathbb R,
\end{equation}
hence $F(x,u(x))\geq0$ for a.a. $x\in\Omega$ in \eqref{wf}.
We also point out that the odd part of function $K$
gives no contribution to the integral of the left-hand side of \eqref{wf} (see \cite{AFP} for details).
Therefore, it would be not restrictive to assume that $K$ is even.

According to the variational nature, weak solutions of \eqref{P} correspond
to critical points of the associated Euler{Lagrange functional $J_\lambda: Z\rightarrow\mathbb R$
defined by
$$\mathcal J_\lambda(u)=\frac{1}{2}\mathscr{M}(\left\|u\right\|^2)-\frac{\lambda}{r+1}\left[\int_\Omega F(x, u(x))dx\right]^{r+1}
-\frac{1}{2^*}\|u\|^{2^*}_{2^*}$$
for any $u\in Z$.

In order to prove the multiplicity result stated in Theorem \ref{main}, we will use some topological results introduced by Krasnoselskii in \cite{K}.
For sake of completeness and for reader's convenience, we recall here some basic notions on the Krasnoselskii's genus.
Let $E$ be a Banach space and let us denote by $\mathcal A$ the class of all closed subsets
$A\subset E\setminus\left\{0\right\}$ that are symmetric with respect to the origin, that is, $u\in A$ implies $-u\in A$.

\begin{definition}
Let $A\in\mathcal A$. The Krasnoselkii's genus $\gamma(A)$ of $A$ is defined as being the least positive integer $k$ such that there is an odd mapping $\phi\in C(A,\mathbb R^k)$ such that $\phi(x)\neq0$ for any $x\in A$. If $k$ does not exist we set $\gamma(A)=\infty$. Furthermore, we set $\gamma(\emptyset)=0$.
\end{definition}

In the sequel we will recall only the properties of the genus that will be
used throughout this work. More information on this subject may be found
in the references \cite{AR, C, K}.

\begin{proposition}
Let $E=\mathbb R^n$ and $\partial\Omega$ be the boundary of an open, symmetric and bounded subset $\Omega$ of $\mathbb R^n$  with $0\in\Omega$. Then $\gamma(\partial\Omega)=n$.
\end{proposition}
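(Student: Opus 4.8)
The plan is to establish the two inequalities $\gamma(\partial\Omega)\le n$ and $\gamma(\partial\Omega)\ge n$ separately, the first being elementary and the second resting on the Borsuk--Ulam theorem. A preliminary remark is that the set $\partial\Omega$ is indeed admissible: since $\Omega$ is open and symmetric, $\partial\Omega=\overline\Omega\setminus\Omega$ is closed and symmetric, and since $0\in\Omega$ is interior we have $0\notin\partial\Omega$, so $\partial\Omega\in\mathcal A$ and $\gamma(\partial\Omega)$ is well defined.

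For the upper bound I would exhibit an explicit admissible map. Because $0$ is an interior point of $\Omega$, the inclusion $\phi\colon\partial\Omega\to\mathbb R^n$, $\phi(x)=x$, is continuous, odd, and never vanishes on $\partial\Omega$. By the very definition of the genus this already gives $\gamma(\partial\Omega)\le n$.

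For the lower bound I would argue by contradiction, reducing to an odd map into a space of one dimension too low. Suppose $\gamma(\partial\Omega)=m\le n-1$. Then there is an odd continuous map $\phi\colon\partial\Omega\to\mathbb R^m\setminus\{0\}$, and composing with the inclusion $\mathbb R^m\hookrightarrow\mathbb R^{n-1}$ I may regard it as an odd continuous map $\phi\colon\partial\Omega\to\mathbb R^{n-1}\setminus\{0\}$. I would then invoke the Borsuk--Ulam theorem in the form: if $U\subset\mathbb R^n$ is open, bounded and symmetric with $0\in U$, then every odd continuous map $\partial U\to\mathbb R^{n-1}$ has a zero. Applied with $U=\Omega$, this contradicts $\phi(x)\neq0$ on $\partial\Omega$ and forces $m\ge n$. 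Combined with the upper bound, this yields $\gamma(\partial\Omega)=n$.

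The main obstacle is the justification of this Borsuk--Ulam statement for the boundary of a \emph{general} symmetric neighborhood, since $\partial\Omega$ need not be a round sphere. I would handle it through Borsuk's antipodal theorem from Brouwer degree theory: extend $\phi$ to a continuous map on $\overline\Omega$ by the Tietze theorem and symmetrize it via $x\mapsto\tfrac12(\tilde\Phi(x)-\tilde\Phi(-x))$ to obtain an odd continuous $\Phi\colon\overline\Omega\to\mathbb R^{n-1}\times\{0\}\subset\mathbb R^n$ whose boundary values $f=\Phi|_{\partial\Omega}$ coincide with $\phi$. Borsuk's theorem gives that $\deg(f,\Omega,0)$ is odd, hence nonzero; on the other hand, since $\Phi(\overline\Omega)$ lies in the hyperplane $\mathbb R^{n-1}\times\{0\}$, a point $q$ off this hyperplane satisfies $\deg(f,\Omega,q)=0$, and the connectedness of $\mathbb R^n\setminus f(\partial\Omega)$ for $n\ge2$ forces $\deg(f,\Omega,0)=\deg(f,\Omega,q)=0$, a contradiction (the case $n=1$ being immediate by inspection). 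Alternatively, if one is willing to assume $\Omega$ star-shaped with respect to the origin, one may simply transport the classical value $\gamma(S^{n-1})=n$ through the odd radial homeomorphism $\partial\Omega\cong S^{n-1}$ together with the monotonicity of the genus under odd maps; but the degree-theoretic route avoids any star-shapedness or regularity hypothesis on $\Omega$.
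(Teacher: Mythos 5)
Your proof is correct, but there is nothing in the paper to compare it against: the proposition is stated as recalled background on Krasnoselskii's genus, with the reader referred to the monographs of Ambrosetti--Malchiodi, Castro and Krasnoselskii for proofs, so the paper itself gives no argument. What you supply is essentially the standard textbook proof: the upper bound $\gamma(\partial\Omega)\le n$ via the inclusion $x\mapsto x$, which is odd, continuous and nonvanishing because $0\in\Omega$ and $\Omega$ is open; and the lower bound by contradiction through the generalized Borsuk--Ulam theorem, which you correctly reduce to Borsuk's antipodal theorem by a Tietze extension, odd symmetrization (which leaves the boundary values unchanged precisely because $\phi$ is odd on $\partial\Omega$), and the two degree computations: $\deg(\Phi,\Omega,0)$ odd by Borsuk, $\deg(\Phi,\Omega,q)=0$ for $q$ off the hyperplane, and equality of the two degrees by constancy of the degree on the connected set $\mathbb{R}^n\setminus\phi(\partial\Omega)$. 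The only step you assert rather than prove is that a compact subset of a hyperplane does not disconnect $\mathbb{R}^n$ for $n\ge 2$; this is true and elementary (every point of the complement can be joined to one of the two open half-spaces, and these can be joined to each other through a point of the hyperplane far from the compact set), so it is a harmless omission. Your closing remark is also apt in the context of this paper: the shortcut via an odd radial homeomorphism $\partial\Omega\cong S^{n-1}$ not only needs star-shapedness, but would be logically awkward here, since the paper deduces $\gamma(S^{k-1})=k$ as an immediate corollary of this very proposition rather than the other way around; your degree-theoretic route keeps the deduction in the same order as the paper's exposition.
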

As immediate consequence we have the following result.
\begin{corollary}\label{cor}
Let $S^{k-1}$ be a $k-1$ dimensional sphere in $\mathbb R^k$. Then, $\gamma(S^{k-1})=k$.
\end{corollary}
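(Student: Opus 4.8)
The plan is to obtain this as a direct specialization of the preceding Proposition. First I would fix the dimension by setting $n=k$ and choosing a concrete symmetric open set whose boundary is exactly $S^{k-1}$, namely the open unit ball
\[
\Omega = B_1(0) = \{x\in\mathbb R^k : |x|<1\}.
\]
I would then verify that $\Omega$ meets all the hypotheses of the Proposition: it is open and bounded by construction; it contains the origin since $|0|=0<1$; and it is symmetric with respect to the origin because $|x|=|-x|$, so that $x\in\Omega$ if and only if $-x\in\Omega$.

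Next I would identify its boundary. The topological boundary of $B_1(0)$ in $\mathbb R^k$ is precisely $\partial\Omega = \{x\in\mathbb R^k : |x|=1\} = S^{k-1}$, the standard $(k-1)$-dimensional sphere. Applying the Proposition with this particular $\Omega$ then yields $\gamma(S^{k-1}) = \gamma(\partial\Omega) = k$, which is exactly the asserted value.

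There is essentially no real obstacle here, since the corollary is just the Proposition read with the unit ball in place of the general symmetric domain. The only point worth a remark is that the statement refers to \emph{a} $(k-1)$-sphere rather than \emph{the} unit sphere; however, any round sphere $\{x\in\mathbb R^k : |x|=\rho\}$ is again the boundary of the symmetric bounded open set $\{x\in\mathbb R^k : |x|<\rho\}$ containing the origin, so the same computation applies verbatim. More generally, the genus is invariant under odd homeomorphisms, so the value $k$ does not depend on the particular representative of the sphere that one fixes.
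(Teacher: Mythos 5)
Your proof is correct and matches the paper's intent exactly: the paper states this corollary as an ``immediate consequence'' of the preceding Proposition, and your specialization to the unit ball $B_1(0)$ (open, bounded, symmetric, containing the origin, with boundary $S^{k-1}$) is precisely that immediate argument. Your closing remark on spheres of other radii and invariance of the genus under odd homeomorphisms is a sensible addition but not needed.
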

\begin{proposition}\label{prop}
If $K\in\mathcal A$, $0\not\in K$ and $\gamma(K)\geq2$, then $K$ has infinitely
many points.
\end{proposition}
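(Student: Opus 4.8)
The plan is to argue by contraposition: I will show that any finite set in $\mathcal A$ that avoids the origin has genus at most $1$, which contradicts the hypothesis $\gamma(K)\geq 2$ and therefore forces $K$ to have infinitely many points. Throughout I work directly from the definition of the genus, seeking an odd, continuous, nowhere-vanishing map into the smallest possible Euclidean space.

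First I would record the structure of a finite $K\in\mathcal A$ with $0\notin K$. Since $K$ is symmetric, $x\in K$ implies $-x\in K$, and since $0\notin K$ we have $x\neq -x$ for every $x\in K$. Hence $K$ partitions into a finite collection of antipodal pairs $\{x_1,-x_1\},\dots,\{x_m,-x_m\}$. Next I would build an odd, continuous, nowhere-vanishing map $\phi\colon K\to\mathbb R=\mathbb R^1$: for each pair choose one representative and set $\phi(x_i)=1$ and $\phi(-x_i)=-1$. This assignment is well defined precisely because $x_i\neq -x_i$, it is odd by construction, and it takes only the values $\pm 1$, so $\phi(x)\neq 0$ for all $x\in K$. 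Because $K$ is finite, each of its points is isolated in the subspace topology, so $\phi$ is automatically continuous. By the definition of the genus this gives $\gamma(K)\leq 1$, which contradicts $\gamma(K)\geq 2$; hence $K$ cannot be finite.

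The single point where one must be slightly careful is the continuity of $\phi$: it is the finiteness of $K$ that renders each point isolated and thereby makes the piecewise $\pm 1$ assignment continuous. For an infinite symmetric set — for instance one joining a point to its antipode through a connected symmetric arc — no such locally constant odd map need exist, and indeed this is exactly the phenomenon that allows the genus to exceed $1$. Thus the argument is genuinely a statement about the interplay between oddness and the discreteness of a finite configuration, and no estimate beyond this structural observation is required.
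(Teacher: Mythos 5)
Your proof is correct. The paper itself does not prove this proposition---it is recalled without proof from the genus-theory references \cite{AR,C,K}---and your contraposition argument is precisely the standard one in those sources: a finite symmetric set avoiding the origin splits into antipodal pairs, the piecewise $\pm 1$ assignment on those pairs is odd, nowhere vanishing, and continuous because a finite subset of a Banach space is discrete, so such a set has genus $1$, contradicting $\gamma(K)\geq 2$.
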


\section{The Palais--Smale condition}\label{sec palais}
In this section we discuss a compactness property for the functional $\mathcal J_\lambda$, given by the Palais--Smale condition. For this, in order to overcome the lack of compactness due to the presence of the critical term we exploit a concentration--compactness principle, introduced in the fractional framework in \cite{PP}. However, the Palais-Smale condition for $\mathcal J_\lambda$ does not hold true at any level, but just under a suitable threshold, which depends also on the best fractional critical Sobolev constant defined by
\begin{equation}\label{sobolev}
S=\inf_{\substack{v\in H^s(\mathbb R^n)\\
v\not=0}}\displaystyle\frac{\iint_{\mathbb R^{2n}}\frac{\left|v(x)-v(y)\right|}{\left|x-y\right|^{n+2s}}}{(\int_{\mathbb R^n} \left|v(x)\right|^{2^*}dx)^{2/2^*}}.
\end{equation}

Before proving this compactness condition, we introduce the following positive constants which will help us for a better explanation
\begin{equation}\label{constants}
\begin{array}{ll}
k_1=\displaystyle\left(\frac{1}{\sigma}-\frac{1}{2^*}\right),\quad & k_2=\displaystyle\left(\frac{1}{r+1}\left(\frac{a_2}{q}\right)^{r+1}-\frac{q}{\sigma}\left(\frac{a_1}{q}\right)^{r+1}\right)\left|\Omega\right|^{\frac{(2^*-q)(r+1)}{2^*}}, \\
k_3=k_1\displaystyle\left[\frac{\theta\, m_0\,S}{c(n,s)}\right]^{n/2s},\quad &k_4=k_1\displaystyle\left(\frac{q(r+1)k_2}{2^*k_1}\right)^{\frac{2^*}{2^*-q(r+1)}}\left(\frac{2^*}{q(r+1)}-1\right).
\end{array}
\end{equation}
The main result of this section is the following theorem.

\begin{lemma}\label{palais}
Let $\left\{u_j\right\}_{j\in\mathbb N}$ be a bounded sequence in $Z$ verifying
\begin{equation}\label{psc}
\mathcal J_\lambda(u_j)\rightarrow c\quad\mbox{and}\quad\mathcal J'_\lambda(u_j)\rightarrow 0\quad\mbox{as }j\rightarrow\infty,
\end{equation}
with
\begin{equation}\label{clevel}
c<k_3-\lambda^{\frac{2^*}{2^*-q(r+1)}}k_4.
\end{equation}

Then, there exists $\lambda_0>0$ such that for any $\lambda\in(0,\lambda_0)$ we have that, up to a subsequence, $\left\{u_j\right\}_{j\in\mathbb N}$ is strongly convergent in $Z$.
\end{lemma}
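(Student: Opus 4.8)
The plan is to run the standard concentration--compactness scheme for critical problems, exploiting two features of the hypotheses: boundedness of $\{u_j\}$ is already granted, and the Kirchhoff term is bounded below by $m_0$ through \eqref{m1}. First I would pass to a subsequence so that $u_j\rightharpoonup u$ weakly in the Hilbert space $Z$; by the compact embedding $Z\hookrightarrow L^p(\Omega)$ for every $p\in[1,2^*)$ (note $q(r+1)<2<2^*$ by \eqref{f2}) this gives $u_j\to u$ strongly in $L^q(\Omega)$ and a.e.\ in $\Omega$. Using the growth bounds \eqref{f'1}--\eqref{f'2} and dominated convergence, the entire subcritical part of both $\mathcal J_\lambda$ and $\mathcal J'_\lambda$ passes to the limit; in particular $\int_\Omega F(x,u_j)\,dx\to\int_\Omega F(x,u)\,dx$ and $\int_\Omega f(x,u_j)\varphi\,dx\to\int_\Omega f(x,u)\varphi\,dx$ for every $\varphi\in Z$. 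The only term that may fail to converge is the critical one, so the heart of the matter is to control $\|u_j\|_{2^*}^{2^*}$.

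Next I would apply the fractional concentration--compactness principle \cite[Theorem 5]{PP} to $\{u_j\}$, producing an at most countable set $\{x_i\}_{i\in I}\subset\overline\Omega$ and nonnegative weights $\{\mu_i\},\{\nu_i\}$ with $|u_j|^{2^*}\rightharpoonup\nu=|u|^{2^*}+\sum_{i}\nu_i\delta_{x_i}$, the associated energy measure dominating $\sum_i\mu_i\delta_{x_i}$, and a Sobolev-type bound of the form $\mu_i\ge (\theta S/c(n,s))\,\nu_i^{2/2^*}$, with $S$ the constant in \eqref{sobolev}. To identify the atoms I would localize: for fixed $i$ take cut-off functions $\psi_{i,\varepsilon}$ supported near $x_i$ and test $\mathcal J'_\lambda(u_j)\to0$ against $\psi_{i,\varepsilon}u_j$. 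Letting $j\to\infty$ and then $\varepsilon\to0$, the subcritical term drops out (its limiting measure is absolutely continuous), the critical term contributes $\nu_i$, and the Kirchhoff factor converges to $M(t_0)$ with $t_0=\lim\|u_j\|^2$ along a further subsequence; this yields $M(t_0)\mu_i=\nu_i$. Combined with the Sobolev bound and $M(t_0)\ge m_0$ from \eqref{m1}, and using $1-2/2^*=2s/n$, I obtain the dichotomy $\nu_i=0$ or $\nu_i\ge[m_0\theta S/c(n,s)]^{n/2s}=k_3/k_1$.

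The decisive step is then to exclude nonzero atoms by an energy estimate. I would evaluate $\mathcal J_\lambda(u_j)-\tfrac1\sigma\langle\mathcal J'_\lambda(u_j),u_j\rangle$: the Kirchhoff contribution $\tfrac12\mathscr M(\|u_j\|^2)-\tfrac1\sigma M(\|u_j\|^2)\|u_j\|^2$ is nonnegative by \eqref{m2}, the critical contribution equals $k_1\|u_j\|_{2^*}^{2^*}$, and the subcritical contribution depends in the limit only on the weak limit $u$. Passing to the limit and using $\|u_j\|_{2^*}^{2^*}\to\|u\|_{2^*}^{2^*}+\sum_i\nu_i$ gives $c\ge k_1\big(\|u\|_{2^*}^{2^*}+\sum_i\nu_i\big)-\lambda k_2\|u\|_{2^*}^{q(r+1)}$, where the subcritical remainder has been bounded from below via \eqref{f'1}--\eqref{f'2} and Hölder's inequality, with $k_2$ as in \eqref{constants}. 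Minimizing $t\mapsto k_1 t^{2^*}-\lambda k_2 t^{q(r+1)}$ over $t=\|u\|_{2^*}$ (a Young-type optimization, legitimate since $q(r+1)<2^*$) produces exactly $-\lambda^{2^*/(2^*-q(r+1))}k_4$. Hence, if some $\nu_{i_0}>0$, then $k_1\nu_{i_0}\ge k_3$ and $c\ge k_3-\lambda^{2^*/(2^*-q(r+1))}k_4$, contradicting \eqref{clevel}. Therefore all $\nu_i=0$, and $u_j\to u$ strongly in $L^{2^*}(\Omega)$.

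Finally, to upgrade to convergence in $Z$, I would test $\mathcal J'_\lambda(u_j)\to0$ against $u_j-u$: the subcritical and now critical terms vanish, leaving $M(\|u_j\|^2)\langle u_j,u_j-u\rangle\to0$. Writing $\ell=\lim\|u_j\|^2\ge\|u\|^2$ and using weak convergence gives $M(\ell)(\ell-\|u\|^2)=0$, and since $M(\ell)\ge m_0>0$ by \eqref{m1} we get $\ell=\|u\|^2$, so $\|u_j\|\to\|u\|$ and thus $u_j\to u$ strongly in the Hilbert space $Z$. The parameter $\lambda_0$ enters only to keep the right-hand side of \eqref{clevel} positive, and one may take $\lambda_0=(k_3/k_4)^{(2^*-q(r+1))/2^*}$. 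I expect the atom-exclusion step to be the main obstacle: one must align the Sobolev bound from \cite{PP} (normalized through $c(n,s)$), the lower Kirchhoff bound $m_0$, and the optimized subcritical loss so that the two constants $k_3$ and $k_4$ in \eqref{constants} emerge with exactly the exponents of $\lambda$ appearing in \eqref{clevel}.
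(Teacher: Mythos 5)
Your proposal follows essentially the same route as the paper's proof: concentration--compactness via \cite[Theorem 5]{PP}, localization with cut-off functions to obtain the dichotomy $\nu_i=0$ or $\nu_i\geq\left[\theta m_0 S/c(n,s)\right]^{n/2s}$, exclusion of atoms by estimating $\mathcal J_\lambda(u_j)-\tfrac{1}{\sigma}\langle\mathcal J'_\lambda(u_j),u_j\rangle$ with \eqref{m2} and minimizing $t\mapsto k_1t^{2^*}-\lambda k_2t^{q(r+1)}$, and finally testing $\mathcal J'_\lambda(u_j)$ against $u_j-u$ and using $M\geq m_0$. The only slip is cosmetic: since \eqref{K2} is a one-sided bound on the kernel, the localization yields the inequality $\nu_i\geq \theta M(t_0)\mu_i/c(n,s)$ rather than the equality $M(t_0)\mu_i=\nu_i$ you state, but that is exactly the direction your argument actually uses.
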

\begin{proof}

Since the sequence $\left\{u_j\right\}_{j\in\mathbb N}$ is bounded in $Z$, by applying \cite[Lemma~2.1]{F} and \cite[Theorem 4.9]{B}, there exists $u\in Z$ such that, up to a subsequence, it follows that
\begin{equation}\label{convergences}
\begin{array}{ll}
u_j\rightharpoonup u\text{ in }Z\mbox{ and in } L^{2^*}(\Omega),\quad & \left\|u_j\right\|\rightarrow \alpha, \\
u_j\to u\text{ in }L^{q}(\Omega)\mbox{ and in } L^2(\Omega),\quad &u_j\to u\text{ a.e.  in }\Omega,\quad|u_j|\le h\text{ a.e.  in }\Omega
\end{array}
\end{equation}
for some $h\in L^q(\Omega)\cap L^2(\Omega)$.

Now, we claim that 
\begin{equation}\label{claim}
\left\|u_j\right\|^2\rightarrow\left\|u\right\|^2\quad\mbox{as }j\rightarrow\infty,
\end{equation}
which clearly implies that $u_j\rightarrow u$ in $Z$ as $j\rightarrow\infty$.
By \cite[Lemma~2.1]{F} we know that $\left\{u_j\right\}_{j\in\mathbb N}$ is also bounded in $H^s_0(\Omega)$, which is the closure of $C^\infty_0(\Omega)$ in $H^s(\Omega)$ (see also \cite{VP} for further details).
So, by Phrokorov's Theorem (see \cite[Theorem 8.6.2]{Bo}) we may suppose that there exist two positive measures $\mu$ and $\nu$ on $\mathbb{R}^n$ such that
\begin{equation}\label{convergenza misure}
\left|(-\Delta)^{s/2} u_j(x)\right|^2 dx\stackrel{*}{\rightharpoonup}\mu\quad\mbox{and}\quad\left|u_j(x)\right|^{2^*}dx\stackrel{*}{\rightharpoonup}\nu\quad\mbox{in }\mathcal M(\mathbb R^N).
\end{equation}
Moreover, by \cite[Theorem~5]{PP} we obtain an at most countable set of distinct points $\left\{x_i\right\}_{i\in J}$, non--negative numbers $\left\{\nu_i\right\}_{i\in J}$, $\left\{\mu_i\right\}_{i\in J}$ and a positive measure $\widetilde{\mu}$ with $Supp\, \widetilde{\mu}\subset\overline{\Omega}$ such that
\begin{equation}\label{numu}
\nu=\left|u(x)\right|^{2^*}dx+\sum_{i\in J} \nu_i\delta_{x_i},\quad\mu=\left|(-\Delta)^{s/2} u(x)\right|^2 dx+\widetilde{\mu}+\sum_{i\in J} \mu_i\delta_{x_i},
\end{equation}
and
\begin{equation}\label{stima}
\nu_i\leq S^{-2^*/2}\mu^{2^*/2}_{i}
\end{equation}
where $S$ is the best Sobolev constant given in \eqref{sobolev}.
Now, in order to prove \eqref{claim} we proceed by steps.

\begin{step}
Fix $i_0\in J$. Then, we prove that either $\nu_{i_0}=0$ or
\begin{equation}\label{4.6}
\nu_{i_0}\geq\displaystyle\left[\frac{\theta\, m_0\,S}{c(n,s)}\right]^{n/2s}.
\end{equation}
\end{step}

Let $\psi\in C^{\infty}_{0}(\mathbb{R}^n,[0,1])$ be such that $\psi\equiv 1$ in $B(0,1)$ and $\psi\equiv0$ in $\mathbb{R}^n\setminus B(0,2)$. For any $\delta>0$ we set $\psi_{\delta,i_0}(x)=\psi((x-x_{i_0})/\delta)$. Clearly
$\left\{\psi_{\delta,i_0} u_j\right\}_{j\in\mathbb N}$ is bounded in $Z$, and so by \eqref{psc} it follows that $\left\langle \mathcal J'_\lambda(u_j),\psi_{\delta,i_0} u_j\right\rangle\rightarrow 0$ as $j\rightarrow\infty$. From this, by applying also \eqref{K2} we get
\begin{equation}\label{that is}
\begin{alignedat}3
o_j(1)&+\lambda\left[\int_\Omega F(x,u_j(x))dx\right]^r\int_\Omega f(x, u_j(x))\psi_{\delta,i_0}(x)u_j(x)dx\\
&+\int_\Omega \left|u_j(x)\right|^{2^*}\psi_{\delta,i_0}(x)dx\\
&\geq\theta M(\left\|u_j\right\|^2)\iint_{\mathbb{R}^{2n}}\frac{\big(u_j(x)-u_j(y)\big)\big(\psi_{\delta,i_0}(x)u_j(x)-\psi_{\delta,i_0}(y)u_j(y)\big)}
{\left|x-y\right|^{n+2s}}\,dxdy
\end{alignedat}
\end{equation}
as $j\rightarrow\infty$. 

By \cite[Proposition 3.6]{VP} we know that for any $v\in C^\infty_0(\Omega)$
$$\iint_{\mathbb{R}^{2n}} \frac{|v(x)-v(y)|^2}{\left|x-y\right|^{n+2s}}\, dxdy=\frac{1}{c(n,s)}\int_{\mathbb{R}^n}\left|(-\Delta)^{s/2}v(x)\right|^2dx,$$
with $c(n,s)$ the dimensional constant defined in \eqref{cns} and, by taking derivative of the above equality, for any $v, w\in C^\infty_0(\Omega)$ we obtain
\begin{equation}\label{deriv}
\iint_{\mathbb{R}^{2n}} \frac{(v(x)-v(y))(w(x)-w(y))}{\left|x-y\right|^{n+2s}}dxdy=\frac{1}{c(n,s)}\int_{\mathbb{R}^n}(-\Delta)^{s/2}v(x)(-\Delta)^{s/2}w(x)dx.
\end{equation}
Furthermore, for any $v, w\in C^\infty_0(\Omega)$ we have
\begin{equation}\label{prod}
(-\Delta)^{s/2}(v w)=v(-\Delta)^{s/2}w+w(-\Delta)^{s/2}v-2I_{s/2}(v,w)
\end{equation}
where the last term is defined, in the principal value sense, as follows
	\[I_{s/2}(v,w)(x)=P.V.\int_{\mathbb{R}^n}\frac{(v(x)-v(y))(w(x)-w(y))}{\left|x-y\right|^{n+s}}dy
\]
for any $x\in\mathbb R^n$.
Thus, by \eqref{deriv} and \eqref{prod} the integral in the right--hand side of \eqref{that is} becomes
\begin{equation}\label{that2}
\begin{alignedat}2
\iint_{\mathbb{R}^{2n}}&\frac{\big(u_j(x)-u_j(y)\big)\big(\psi_{\delta,i_0}(x)u_j(x)-\psi_{\delta,i_0}(y)u_j(y)\big)}{\left|x-y\right|^{n+2s}}\,dxdy\\
&=\frac{1}{c(n,s)}\int_{\mathbb{R}^n}u_j(x)(-\Delta)^{s/2}u_j(x)(-\Delta)^{s/2}\psi_{\delta,i_0}(x)dx\\
&\quad+\frac{1}{c(n,s)}\int_{\mathbb{R}^n}\left|(-\Delta)^{s/2}u_j(x)\right|^2\psi_{\delta,i_0}(x)dx\\
&\quad-\frac{2}{c(n,s)}\int_{\mathbb{R}^n}(-\Delta)^{s/2}u_j(x)\int_{\mathbb{R}^n}\frac{(u_j(x)-u_j(y))(\psi_{\delta,i_0}(x)-\psi_{\delta,i_0}(y))}{\left|x-y\right|^{n+s}}dxdy.
\end{alignedat}
\end{equation}
By \cite[Lemmas 2.8 and 2.9]{BS} we have
\begin{equation}
\lim_{\delta\rightarrow0}\lim_{j\rightarrow\infty}\left|\int_{\mathbb{R}^n}u_j(x)(-\Delta)^{s/2}u_j(x)(-\Delta)^{s/2}\psi_{\delta,i_0}(x)dx\right|=0
\end{equation}
and
\begin{equation}\label{colorado}
\lim_{\delta\rightarrow0}\lim_{j\rightarrow\infty}\left|\int_{\mathbb{R}^n}(-\Delta)^{s/2}u_j(x)\int_{\mathbb{R}^n}\frac{(u_j(x)-u_j(y))(\psi_{\delta,i_0}(x)-\psi_{\delta,i_0}(y))}{\left|x-y\right|^{n+s}}dxdy\right|=0.
\end{equation}
Thus, by combining \eqref{that2}--\eqref{colorado} and \eqref{convergenza misure}--\eqref{numu} we get
\begin{equation}\label{that3}
\lim_{\delta\rightarrow0}\lim_{j\rightarrow\infty}\left[\iint_{\mathbb{R}^{2n}}
\frac{\big(u_j(x)-u_j(y)\big)\big(\psi_{\delta,i_0}(x)u_j(x)-\psi_{\delta,i_0}(y)u_j(y)\big)}{\left|x-y\right|^{n+2s}}dxdy\right]\geq\frac{1}{c(n,s)}\mu_{i_0}.
\end{equation}

Moreover by \eqref{f'2}, for any $j\in\mathbb N$ we get
\[\frac{a_1}{q}\left\|u_j\right\|^q_q\leq \int_\Omega F(x,u_j(x))dx\leq \frac{a_2}{q}\left\|u_j\right\|^q_q,
\]
and since $\left\{u_j\right\}_{j\in\mathbb N}$ is bounded in $Z$ and $L^q(\Omega)$ we conclude that
\begin{equation}\label{bound}
\mbox{the sequence }\left\{\int_\Omega F(x,u_j(x))dx\right\}_{j\in\mathbb N}\mbox{ is bounded in }\mathbb R.
\end{equation}
While, by \eqref{f'1} and the Dominated Convergence Theorem we get
\begin{equation*}
\int_{B(x_{i_0},\delta)} f(x, u_j(x))u_j(x)\psi_{\delta,i_0}(x)dx\rightarrow\int_{B(x_{i_0},\delta)} f(x, u(x))u(x)\psi_{\delta,i_0}(x)dx\quad\mbox{as }j\rightarrow\infty,
\end{equation*}
and so by sending $\delta\rightarrow0$ we observe that
\begin{equation}\label{term f}
\lim_{\delta\rightarrow0}\lim_{j\rightarrow\infty}\int_{B(x_{i_0},\delta)} f(x, u_j(x))u_j(x)\psi_{\delta,i_0}(x)dx=0.
\end{equation}
Furthermore, by \eqref{convergenza misure} it follows that
	\[\int_\Omega \left|u_j(x)\right|^{2^*}\psi_{\delta,i_0}(x)dx\rightarrow\int_\Omega \psi_{\delta,i_0}(x)d\nu\quad\mbox{as }j\rightarrow\infty,
\]
and by combining this last formula with \eqref{that is}, \eqref{that3} and \eqref{term f}, using also \eqref{convergenza misure}, we obtain
\begin{equation}\label{nui0}
\nu_{i_0}\geq \frac{\theta M(\alpha^2)}{c(n,s)}\mu_{i_0},
\end{equation}
since $M(\left\|u_j\right\|^2)\rightarrow M(\alpha^2)$ as $j\rightarrow\infty$, by continuity of $M$ and \eqref{convergences}.
Thus, by \eqref{m1}, \eqref{stima} and \eqref{nui0} we have that either $\nu_{i_0}=0$ or $\nu_{i_0}$ verifies \eqref{4.6}.

\begin{step}
Estimate \eqref{4.6} can not occur, hence $\nu_{i_0}=0$.
\end{step}
By contradiction we assume that \eqref{4.6} holds true.
By \eqref{psc} we have
\begin{equation}\label{4.7}
c=\lim_{j\rightarrow\infty}\left(\mathcal J_\lambda(u_j)-\frac{1}{\sigma} \left\langle \mathcal J'_\lambda(u_j),u_j\right\rangle\right).
\end{equation}
Moreover, by \eqref{m2}, \eqref{f'1} and \eqref{f'2} we have
\begin{equation}\label{4.8}
\begin{alignedat}5
\mathcal J_\lambda(u_j)&-\frac{1}{\sigma}\left\langle \mathcal J'_\lambda(u_j),u_j\right\rangle\\
&\geq\frac{1}{2}\mathscr{M}(\left\|u_j\right\|^2)-\frac{1}{\sigma}M(\left\|u_j\right\|^2)\left\|u_j\right\|^2-\frac{\lambda}{r+1}\left[\int_\Omega F(x,u_j(x))dx\right]^{r+1}\\
&\quad+\frac{\lambda}{\sigma}\left[\int_\Omega F(x,u_j(x))dx\right]^{r}\int_\Omega f(x,u_j(x))u_j(x)dx\\
&\quad+\left(\frac{1}{\sigma}-\frac{1}{2^*}\right)\int_\Omega\left|u_j(x)\right|^{2^*}dx\\
&\geq\lambda\left(\frac{1}{\sigma}\left(\frac{a_1}{q}\right)^ra_1-\frac{1}{r+1}\left(\frac{a_2}{q}\right)^{r+1}\right)\left[\int_\Omega\left|u_j(x)\right|^q dx\right]^{r+1}\\
&\quad+\left(\frac{1}{\sigma}-\frac{1}{2^*}\right)\int_\Omega\psi_{\delta,i_0}(x)\left|u_j(x)\right|^{2^*}dx,
\end{alignedat}
\end{equation}
since also $0\leq\psi_{\delta,i_0}(x)\leq 1$.
By combining \eqref{4.7} and \eqref{4.8}, using also \eqref{convergenza misure}, we get
	\[c\geq\lambda\left(\frac{q}{\sigma}\left(\frac{a_1}{q}\right)^{r+1}-\frac{1}{r+1}\left(\frac{a_2}{q}\right)^{r+1}\right)\left[\int_\Omega\left|u(x)\right|^q dx\right]^{r+1}+\left(\frac{1}{\sigma}-\frac{1}{2^*}\right)\int_\Omega\psi_{\delta,i_0}(x)d\nu,
\]
from which, by sending $\delta\rightarrow0$ and by using H\" older inequality and \eqref{4.6}, it follows that
\begin{equation*}
\begin{alignedat}2
c&\geq\lambda\left(\frac{q}{\sigma}\left(\frac{a_1}{q}\right)^{r+1}-\frac{1}{r+1}\left(\frac{a_2}{q}\right)^{r+1}\right)\left[\left|\Omega\right|^{\frac{2^*-q}{2^*}}\left\|u\right\|^q_{2^*}\right]^{r+1}\\
	&\quad+\left(\frac{1}{\sigma}-\frac{1}{2^*}\right)\left\|u\right\|^{2^*}_{2^*}+\left(\frac{1}{\sigma}-\frac{1}{2^*}\right)\displaystyle\left[\frac{\theta\, m_0\,S}{c(n,s)}\right]^{n/2s},
\end{alignedat}
\end{equation*}
and by \eqref{constants}
\begin{equation}\label{contradict}
c\geq-\lambda k_2\left\|u\right\|^{q(r+1)}_{2^*}+k_1\left\|u\right\|^{2^*}_{2^*}+k_3.
\end{equation}
Now, it is easy to see that function $g(t)=k_1t^{2^*}-\lambda k_2 t^{q(r+1)}$ attains its absolute minimum at point
	\[t_0=\left(\lambda\,\frac{q(r+1)k_2}{2^*k_1}\right)^{\frac{1}{2^*-q(r+1)}}>0,
\]
and after calculations
	\[g(t_0)=k_1\left(\lambda\,\frac{q(r+1)k_2}{2^* k_1}\right)^{\frac{2^*}{2^*-q(r+1)}}-\lambda k_2\left(\lambda\,\frac{q(r+1)k_2}{2^* k_1}\right)^{\frac{q(r+1)}{2^*-q(r+1)}}=-\lambda^{\frac{2^*}{2^*-q(r+1)}}k_4.
\]
Thus, by \eqref{contradict} we conclude that 
	\[c\geq k_3-\lambda^{\frac{2^*}{2^*-q(r+1)}}k_4,
\]
which contradicts \eqref{clevel}. So, $\nu_{i_0}=0$.

\begin{step}
Claim \eqref{claim} holds true.
\end{step}
By considering that $i_0$ was arbitrary, we deduce that $\nu_i=0$ for any $i\in J$. As a consequence, from also \eqref{convergenza misure} and \eqref{numu} it follows that
$u_j\to u$ in $L^{2^*}(\Omega)$ as $j\rightarrow\infty$.
Since $\left\{u_j\right\}_{j\in\mathbb N}$ is bounded in $Z$, by \eqref{psc} it follows that $\left\langle \mathcal J'_\lambda(u_j),u_j-u\right\rangle\rightarrow 0$ as $j\rightarrow\infty$, that is
\begin{equation}\label{that4}
\begin{alignedat}2
M(\left\|u_j\right\|^2)\left\langle u_j,u_j-u\right\rangle&-\lambda\left[\int_\Omega F(x,u_j(x))dx\right]^r\int_\Omega f(x, u_j(x))(u_j(x)-u(x))dx\\
&+\int_\Omega \left|u_j(x)\right|^{2^*-2}u_j(x)(u_j(x)-u(x))dx=o_j(1)\quad\mbox{as }j\rightarrow\infty.
\end{alignedat}
\end{equation}
By \eqref{f2}, \eqref{convergences} and the Dominated Convergence Theorem we get
\begin{equation}
\left|\int_\Omega f(x,u_j(x))(u_j(x)-u(x))dx\right|\rightarrow 0\quad\mbox{as }j\rightarrow\infty,
\end{equation}
while by considering H\" older inequality
\begin{equation}\label{that4.1}
\left|\int_\Omega \left|u_j(x)\right|^{2^*-2}u_j(x)(u_j(x)-u(x))dx\right|\rightarrow0\quad\mbox{as }j\rightarrow\infty.
\end{equation}
So, by \eqref{that4}--\eqref{that4.1} and \eqref{bound} we have
\begin{equation}
M(\alpha^2)(\left\|u_j\right\|^2-\left\langle u_j,u\right\rangle)\rightarrow0\quad\mbox{as }j\rightarrow\infty,
\end{equation}
recalling that $M(\left\|u_j\right\|^2)\rightarrow M(\alpha^2)$ as $j\rightarrow\infty$.
From this, remembering that $u_j\rightharpoonup u$ in $Z$ and considering that $M(\alpha^2)>0$ by \eqref{m1}, we conclude the proof of claim \eqref{claim}.
\end{proof}

In \eqref{clevel} we stress the presence of $m_0$, given by \eqref{m1}. For this, the same approach used in Lemma \ref{palais} does not work in the degenerate framework. Indeed, if $M$ is a continuous function satisfying $M(0)=0$, we could still have positive boundedness from below (see for example condition $(M_2)$ in \cite{AFP}), but the bound is no longer uniform as in \eqref{m1}. However, problem \eqref{P} with a degenerate Kirchhoff function $M$ could admit a non--trivial solution, by proceeding as in the proof of \cite[Theorem 1.1]{AFP}. For this, we could prove a Palais--Smale condition just at a suitable level $c_\lambda$ defined in \cite{AFP}, by strongly exploiting the asymptotic condition $\displaystyle\lim_{\lambda\rightarrow\infty}c_\lambda=0$, proved in \cite[Lemma 3.3]{AFP}, to overcome the lack of compactness due to the presence of a critical term.

\section{A truncation argument}\label{sec truncation}

We note that our functional $\mathcal J_\lambda$ is not bounded from below in $Z$.
Indeed, by \eqref{m2} for any $\varepsilon>0$ there exists $\delta=\delta(\varepsilon)=\mathscr M(\varepsilon)/\varepsilon^{\sigma/2}$ such that
\begin{equation}\label{m'2}
\mathscr M(t)\leq\delta t^{\sigma/2}\quad\mbox{for any }t\geq\varepsilon.
\end{equation}
So, by fixing $\varepsilon>0$ in the above inequality and by using \eqref{f'2} we see that for any $u\in Z$
	\[\mathcal J_\lambda(tu)\leq t^\sigma\frac{\delta}{2}\left\|u\right\|^\sigma-t^{q(r+1)}\frac{\lambda}{r+1}\left(\frac{a_1}{q}\right)^{r+1}\left\|u\right\|^{q(r+1)}_q-t^{2^*}\frac{1}{2^*}\left\|u\right\|^{2^*}_{2^*}\rightarrow-\infty\quad\mbox{as }t\rightarrow\infty,
\]
since $q(r+1)<2\leq\sigma<2^*$.

For this in the sequel we introduce a truncation like in \cite{AP}, to get a special lower bound which will be worth to construct critical values for $\mathcal J_\lambda$.
By \eqref{m1}, \eqref{f'2} and the fractional Sobolev inequality proved in \cite[Theorem 6.5]{VP}
	\[\mathcal J_\lambda(u)\geq\frac{m_0}{2}\left\|u\right\|^2-\frac{\lambda}{r+1}\left(\frac{a_2}{q}\right)^{r+1}\frac{1}{S^{r+1}_q}\left\|u\right\|^{q(r+1)}-\frac{1}{2^*S_{2^*}}\left\|u\right\|^{2^*}=\mathcal G_\lambda(\left\|u\right\|)
\]
where we denote
	\[\mathcal G_\lambda(t)=\frac{m_0}{2}t^2-\frac{\lambda}{r+1}\left(\frac{a_2}{q}\right)^{r+1}\frac{1}{S^{r+1}_q}t^{q(r+1)}-\frac{1}{2^*S_{2^*}}t^{2^*}.
\]
Now, we can take $R_1>0$ sufficiently small such that
	\[\frac{m_0}{2}R^2_1-\frac{1}{2^*S_{2^*}}R^{2^*}_1>0,
\]
and we define
\begin{equation}\label{stella}
\lambda^*=\frac{r+1}{2}\left(\frac{q}{a_2}\right)^{r+1}\frac{S^{r+1}_q}{R^{q(r+1)}_1}\left(\frac{m_0}{2}R^2_1-\frac{1}{2^*S_{2^*}}R^{2^*}_1\right),
\end{equation}
so that $\mathcal G_{\lambda^*}(R_1)>0$. From this, we consider
	\[R_0=\max\left\{t\in(0,R_1):\,\,\mathcal G_{\lambda^*}(t)\leq0\right\}.
\]
Since by $q(r+1)<2$ we have $\mathcal G_{\lambda}(t)\leq0$ for t near to 0 and since also $\mathcal G_{\lambda^*}(R_1)>0$, it easily follows that $\mathcal G_{\lambda^*}(R_0)=0$.

We can choose $\phi\in C^\infty_0([0,\infty),[0,1])$ such that $\phi(t)=1$ if $t\in[0,R_0]$ and $\phi(t)=0$ if $t\in[R_1,\infty)$. So, we consider the truncated functional
$$\mathcal I_\lambda(u)=\frac{1}{2}\mathscr{M}(\left\|u\right\|^2)-\frac{\lambda}{r+1}\left[\int_\Omega F(x, u(x))dx\right]^{r+1}
-\phi(\left\|u\right\|)\frac{1}{2^*}\|u\|^{2^*}_{2^*}.$$
It immediately follows that $\mathcal I_\lambda(u)\rightarrow\infty$ as $\left\|u\right\|\rightarrow\infty$. Hence, $\mathcal I_\lambda$ is coercive and bounded from below.

Now, we prove a local Palais--Smale and a topological result for the truncated functional $\mathcal I_\lambda$.

\begin{lemma}\label{localpalais}
There exists $\overline{\lambda}>0$ such that for any $\lambda\in(0,\overline{\lambda})$
\begin{enumerate}
\item[$(i)$] if $\mathcal I_\lambda(u)\leq0$ then $\left\|u\right\|<R_0$ and also $\mathcal J_\lambda(v)=\mathcal I_\lambda(v)$ for any $v$ in a sufficiently small neighborhood of $u$;
\item[$(ii)$]
$\mathcal I_\lambda$ satisfies a local Palais--Smale condition for $c\leq0$.
\end{enumerate} 
\end{lemma}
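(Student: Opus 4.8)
The plan is to control $\mathcal I_\lambda$ from below by a function of the single variable $\|u\|$ and to pick $\overline\lambda$ so small that the truncation confines the whole sublevel set $\{\mathcal I_\lambda\le 0\}$ strictly inside the ball $\{\|u\|<R_0\}$. There $\phi(\|\cdot\|)\equiv1$, so $\mathcal I_\lambda$ and $\mathcal J_\lambda$ agree together with their differentials, and the compactness analysis for $\mathcal I_\lambda$ reduces to Lemma~\ref{palais}.

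First, repeating the estimate that produced $\mathcal G_\lambda$ before \eqref{stella} but keeping the cut--off, I would use \eqref{m1}, \eqref{f'2} and the fractional Sobolev inequality to get $\mathcal I_\lambda(u)\ge\overline{\mathcal G}_\lambda(\|u\|)$ for every $u\in Z$, where
\[
\overline{\mathcal G}_\lambda(t)=\frac{m_0}{2}t^2-\frac{\lambda}{r+1}\left(\frac{a_2}{q}\right)^{r+1}\frac{1}{S_q^{r+1}}t^{q(r+1)}-\phi(t)\frac{1}{2^*S_{2^*}}t^{2^*}.
\]
Since $0\le\phi\le1$ with $\phi\equiv1$ on $[0,R_0]$, we have $\overline{\mathcal G}_\lambda=\mathcal G_\lambda$ on $[0,R_0]$ and $\overline{\mathcal G}_\lambda(t)\ge\mathcal G_\lambda(t)$ for all $t$. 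The core of part $(i)$ is the claim that, for $\lambda<\lambda^*$, one has $\overline{\mathcal G}_\lambda(t)>0$ for every $t\ge R_0$. To see this I would note that $t\mapsto\overline{\mathcal G}_\lambda(t)$ is strictly decreasing in $\lambda$ for each fixed $t>0$, so $\overline{\mathcal G}_\lambda\ge\overline{\mathcal G}_{\lambda^*}$ on $(0,\infty)$; it then suffices to check $\overline{\mathcal G}_{\lambda^*}>0$ on $(R_0,\infty)$ and $\overline{\mathcal G}_\lambda(R_0)>0$. On $(R_0,R_1]$ we have $\overline{\mathcal G}_{\lambda^*}\ge\mathcal G_{\lambda^*}>0$ by the very definition of $R_0$; for $t\ge R_1$ the cut--off vanishes, $\overline{\mathcal G}_{\lambda^*}(t)/t^{q(r+1)}$ is increasing because $q(r+1)<2$, and it is positive at $R_1$ by \eqref{stella}, so $\overline{\mathcal G}_{\lambda^*}>0$ there as well; finally $\overline{\mathcal G}_\lambda(R_0)=\mathcal G_\lambda(R_0)>\mathcal G_{\lambda^*}(R_0)=0$ by strict monotonicity in $\lambda$. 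With this claim, if $\mathcal I_\lambda(u)\le0$ then $\overline{\mathcal G}_\lambda(\|u\|)\le0$ forces $\|u\|<R_0$; continuity of $\|\cdot\|$ then produces a neighborhood of $u$ on which $\|v\|<R_0$, hence $\phi(\|v\|)=1$ and $\mathcal J_\lambda(v)=\mathcal I_\lambda(v)$, which proves $(i)$.

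For $(ii)$, let $\{u_j\}$ be a Palais--Smale sequence for $\mathcal I_\lambda$ at a level $c\le0$. Coercivity of $\mathcal I_\lambda$ makes $\{u_j\}$ bounded, so up to a subsequence $\|u_j\|\to\beta$. Passing to the limit in $\mathcal I_\lambda(u_j)\ge\overline{\mathcal G}_\lambda(\|u_j\|)$ gives $\overline{\mathcal G}_\lambda(\beta)\le c\le0$, and the positivity of $\overline{\mathcal G}_\lambda$ on $[R_0,\infty)$ forces $\beta<R_0$. Hence $\|u_j\|<R_0$ for $j$ large, so on a neighborhood of each such $u_j$ the two functionals coincide, whence $\mathcal J_\lambda(u_j)=\mathcal I_\lambda(u_j)\to c$ and $\mathcal J'_\lambda(u_j)=\mathcal I'_\lambda(u_j)\to0$; that is, the tail of $\{u_j\}$ is a bounded Palais--Smale sequence for $\mathcal J_\lambda$ at level $c$. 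It remains to meet the threshold \eqref{clevel}: choosing $\overline\lambda\le\min\{\lambda^*,\lambda_0\}$ and small enough that $k_3-\lambda^{2^*/(2^*-q(r+1))}k_4>0\ge c$, which is possible since $k_3,k_4>0$ by \eqref{constants}, Lemma~\ref{palais} applies and yields a strongly convergent subsequence.

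The main obstacle is the borderline level $c=0$: one cannot simply argue that $\mathcal I_\lambda(u_j)$ is eventually negative, so the confinement $\|u_j\|<R_0$ must be extracted from the limit $\beta<R_0$, which is exactly where the strict positivity of $\overline{\mathcal G}_\lambda$ on $[R_0,\infty)$ is used. That strict positivity is in turn the delicate point of part $(i)$ and rests on the strict monotonicity in $\lambda$ together with the comparison with the critical value $\lambda^*$; matching the concentration--compactness threshold \eqref{clevel} then merely constrains $\overline\lambda$.
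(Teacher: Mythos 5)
Your proposal is correct and follows essentially the same route as the paper: a one-variable lower bound for the truncated functional built from \eqref{m1}, \eqref{f'2} and the fractional Sobolev inequality, the comparison parameter $\lambda^*$ from \eqref{stella}, the choice $\overline{\lambda}\le\min\{\lambda_0,\lambda^*\}$ compatible with the threshold \eqref{clevel}, and the reduction of part $(ii)$ to Lemma \ref{palais}. In fact your write-up is slightly tighter than the paper's at two points: the strict positivity of $\overline{\mathcal G}_\lambda$ on $[R_0,\infty)$ yields the strict confinement $\left\|u\right\|<R_0$ (the paper's case analysis literally concludes only $\left\|u\right\|\le R_0$), and your passage to the limit $\left\|u_j\right\|\to\beta<R_0$ explicitly handles the borderline level $c=0$, a step the paper glosses over when it identifies $\mathcal J_\lambda\equiv\mathcal I_\lambda$ on Palais--Smale sequences.
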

\begin{proof}
Considering $\lambda_0$ and $\lambda^*$ given respectively by Lemma \ref{palais} and \eqref{stella}, we choose $\overline{\lambda}$ sufficiently small such that $\overline{\lambda}\leq\min\left\{\lambda_0,\lambda^*\right\}$ and
	\[0<k_3-\overline{\lambda}\,^{\frac{2^*}{2^*-q(r+1)}}k_4,
\]
with $k_3, k_4$ defined as in \eqref{constants}. Let $\lambda<\overline{\lambda}$.

For proving $(i)$ we assume that $\mathcal I_\lambda(u)\leq0$. When $\left\|u\right\|\geq R_1$, by using \eqref{m1}, \eqref{f'2}, \cite[Theorem 6.5]{VP} and $\lambda<\lambda^*$, we see that
	\[\mathcal I_\lambda(u)\geq\frac{m_0}{2}\left\|u\right\|^2-\frac{\lambda^*}{r+1}\left(\frac{a_2}{q}\right)^{r+1}\frac{1}{S^{r+1}_q}\left\|u\right\|^{q(r+1)}>0
\]
where the last inequality follows by $q(r+1)<2$ and because by $\mathcal G_{\lambda^*}(R_1)>0$ we have
	\[\frac{m_0}{2}R^2_1-\frac{\lambda^*}{r+1}\left(\frac{a_2}{q}\right)^{r+1}\frac{1}{S^{r+1}_q}R^{q(r+1)}_1>0.
\]
Thus, we get the contradiction $0\geq\mathcal I_\lambda(u)>0$.
When $\left\|u\right\|< R_1$, since $\phi(t)\leq 1$ for any $t\in[0,\infty)$ and $\lambda<\lambda^*$, we have
	\[0\geq\mathcal I_\lambda(u)\geq\mathcal G_\lambda(\left\|u\right\|)\geq\mathcal G_{\lambda^*}(\left\|u\right\|),
\]
and this yields $\left\|u\right\|\leq R_0$, by definition of $R_0$.
Furthermore, for any $u\in B(0,R_0/2)$ we have $\mathcal I_\lambda(u)=\mathcal J_\lambda(u)$.

To prove a local Palais--Smale condition for $\mathcal I_\lambda$ at level $c\leq0$, we first observe that any Palais--Smale sequences for $\mathcal I_\lambda$ must be bounded, since $\mathcal I_\lambda$ is coercive.
So, since $\lambda<\lambda_0$ and
	\[0<k_3-\overline{\lambda}\,^{\frac{2^*}{2^*-q(r+1)}}k_4<k_3-\lambda^{\frac{2^*}{2^*-q(r+1)}}k_4,
\]
by Lemma \ref{palais} we have a local Palais--Smale condition for $\mathcal J_\lambda\equiv\mathcal I_\lambda$ at any level $c\leq 0$.
\end{proof}

Here we briefly recall the following eigenvalue problem
\begin{equation}\label{autovalori}
-\mathcal L_K u=\lambda u\quad\mbox{in }\Omega,\qquad u=0\quad\mbox{in }\mathbb R^{n}\setminus\Omega.
\end{equation}
The spectral theory related to problem \eqref{autovalori} will be worth to get the next technical lemma.
By \cite[Proposition 2.2]{F} we know that operator $-\mathcal L_K$ possesses a divergent sequence of positive eigenvalues
	\[\lambda_1<\lambda_2\leq\ldots\leq\lambda_k\leq\lambda_{k+1}\leq\ldots
\]
In the sequel we will denote by $e_k$ the eigenfunction related to the eigenvalue $\lambda_k$, for any $k\in\mathbb N$.

\begin{lemma}\label{genus}
For any $\lambda>0$ and $k\in\mathbb N$, there exists $\varepsilon=\varepsilon(\lambda, k)>0$ such that 
	\[\gamma(\mathcal I_\lambda^{-\varepsilon})\geq k,
\]
where $\mathcal I_\lambda^{-\varepsilon}=\left\{u\in Z:\,\,\mathcal I_\lambda(u)\leq-\varepsilon\right\}$.
\end{lemma}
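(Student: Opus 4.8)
The plan is to produce, for each $k\in\mathbb N$, a $k$-dimensional symmetric set on which $\mathcal I_\lambda$ takes strictly negative values bounded away from $0$; then monotonicity of the genus gives $\gamma(\mathcal I_\lambda^{-\varepsilon})\ge k$ for a suitably small $\varepsilon$. The natural candidate for the symmetric set is the unit sphere of the $k$-dimensional eigenspace $E_k=\operatorname{span}\{e_1,\dots,e_k\}$, scaled to a small radius, because on a finite-dimensional subspace all norms are equivalent and the negative $q(r+1)$-homogeneous term of the functional dominates near the origin.

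\textbf{Main step: estimating $\mathcal I_\lambda$ on $E_k$.}
First I would fix the finite-dimensional space $E_k$ and record that, since $\dim E_k=k<\infty$, the norms $\|\cdot\|$, $\|\cdot\|_q$ and $\|\cdot\|_{2^*}$ are all equivalent on $E_k$; in particular there is a constant $c_k>0$ with $\|u\|_q\ge c_k\|u\|$ for every $u\in E_k$. Now take $u\in E_k$ with small norm, say $\|u\|\le R_0$, so that $\phi(\|u\|)=1$ and hence $\mathcal I_\lambda(u)=\mathcal J_\lambda(u)$. Using \eqref{m'2} to bound $\tfrac12\mathscr M(\|u\|^2)$ from above by a multiple of $\|u\|^\sigma$ (for $\|u\|^2$ bounded below, which one arranges by working on a fixed sphere first), together with the lower bound for $F$ in \eqref{f'2}, one obtains on the sphere $\|u\|=\rho$ (with $\rho\le R_0$ small) an estimate of the form
\begin{equation*}
\mathcal I_\lambda(u)\le C_1\rho^\sigma-C_2\,\lambda\,\rho^{q(r+1)},
\end{equation*}
where $C_1,C_2>0$ depend on $k$ (through $c_k$) but not on the particular point of the sphere. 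Since $q(r+1)<2\le\sigma$, the negative term dominates for small $\rho$, so there exist $\rho_k\in(0,R_0]$ and $\varepsilon=\varepsilon(\lambda,k)>0$ with $\mathcal I_\lambda(u)\le-\varepsilon$ for all $u$ in the sphere $S_{\rho_k}=\{u\in E_k:\|u\|=\rho_k\}$.

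\textbf{Conclusion via genus.}
The sphere $S_{\rho_k}$ is homeomorphic to $S^{k-1}$ by an odd homeomorphism (radial scaling inside the $k$-dimensional space $E_k$), so Corollary~\ref{cor} and the invariance of the genus under odd homeomorphisms give $\gamma(S_{\rho_k})=k$. By construction $S_{\rho_k}\subset\mathcal I_\lambda^{-\varepsilon}$, and $\mathcal I_\lambda^{-\varepsilon}\in\mathcal A$ since $\mathcal I_\lambda$ is even (by \eqref{f1}, $F(x,\cdot)$ is even, and the remaining terms depend only on $\|u\|$) and continuous, so the sublevel set is closed, symmetric and avoids $0$. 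Monotonicity of the genus then yields $\gamma(\mathcal I_\lambda^{-\varepsilon})\ge\gamma(S_{\rho_k})=k$, which is the claim.

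\textbf{Where the difficulty lies.}
The routine parts are the genus bookkeeping and the norm-equivalence argument; the only delicate point is arranging the homogeneity estimate so that the constants $C_1,C_2$ are genuinely uniform over the sphere and so that the use of \eqref{m'2} is legitimate. Inequality \eqref{m'2} requires $\|u\|^2\ge\varepsilon_0$ for a fixed threshold, whereas I want $\rho$ small; the clean fix is to first work on a sphere of radius $\rho$ with $\rho^2\ge\varepsilon_0$ to control $\mathscr M$, then exploit that the comparison exponents $\sigma$ and $q(r+1)$ straddle $2$ so that shrinking $\rho$ within the admissible range still drives $\mathcal I_\lambda$ negative — alternatively, simply bound $\tfrac12\mathscr M(\|u\|^2)\le C\|u\|^2$ for $\|u\|\le R_1$ using continuity of $\mathscr M$ on the compact interval $[0,R_1^2]$, which avoids \eqref{m'2} entirely and is the cleaner route. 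This is the step I would write most carefully.
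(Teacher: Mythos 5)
Your proposal is correct and follows essentially the same route as the paper: the paper also works on $\mathbb H_k=\mathrm{span}\{e_1,\dots,e_k\}$, uses finite-dimensional norm equivalence $c(k)\|u\|^q\le\|u\|_q^q$, bounds the Kirchhoff term by $\tfrac{m^*}{2}\|u\|^2$ with $m^*=\max_{\tau\in[0,R_0]}M(\tau)$ (exactly your ``cleaner route'', noting the linear bound comes from $\mathscr M(t)=\int_0^t M(\tau)\,d\tau\le t\,\max M$, i.e.\ continuity of $M$ rather than of $\mathscr M$), and then exploits $q(r+1)<2$ on a small sphere $\mathbb S_k$ homeomorphic to $S^{k-1}$ together with monotonicity of the genus. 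Your initial idea of invoking \eqref{m'2} is indeed the wrong tool here, for the reason you identify, and the paper avoids it exactly as you finally do.
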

\begin{proof}
Fix $\lambda>0$, $k\in\mathbb N$ and let us consider $\mathbb H_k=\mbox{span}\left\{e_1,\ldots,e_k\right\}$ the linear subspace of $Z$ generated by the first $k$ eigenfunctions of problem \eqref{autovalori}. Since $\mathbb H_k$ is finite dimensional, there exists a positive constant $c(k)$ such that
	\[c(k)\left\|u\right\|^q\leq\left\|u\right\|^q_q,
\]
for any $u\in \mathbb H_k$.
So by using also \eqref{f'2}, for any $u\in\mathbb H_k$ with $\left\|u\right\|\leq R_0$ we get
\begin{equation*}
\begin{alignedat}2
\mathcal I_\lambda(u)&\leq \frac{m^*}{2}\left\|u\right\|^2-\frac{\lambda}{(r+1)}\left(\frac{a_1}{q}\right)^{r+1}\left\|u\right\|^{q(r+1)}_q-\frac{1}{2^*}\left\|u\right\|^{2^*}_{2^*}\\
&\leq\frac{m^*}{2}\left\|u\right\|^2-\frac{\lambda}{(r+1)}\left(\frac{a_1 c(k)}{q}\right)^{r+1}\left\|u\right\|^{q(r+1)},
\end{alignedat}
\end{equation*}
with $m^*=\max_{\tau\in[0,R_0]} M(\tau)<\infty$, by continuity of $M$.
Finally, let $\rho$ and $R$ be two positive constants with
\begin{equation}\label{raggi}
\rho<R<\min\left\{R_0, \left[\frac{\lambda}{(r+1)}\left(\frac{a_1 c(k)}{q}\right)^{r+1}\frac{2}{m^*}\right]^{\frac{1}{2-q(r+1)}}\right\},
\end{equation}
and let
	\[\mathbb S_k=\left\{u\in\mathbb H_k:\,\,\left\|u\right\|=\rho\right\}.
\]
Of course, $\mathbb S_k$ is homeomorphic to $S^{k-1}$. Moreover for any $u\in\mathbb S_k$
\begin{equation*}
\begin{alignedat}2
\mathcal I_\lambda(u)&\leq\rho^{q(r+1)}\left(\frac{m^*}{2}\rho^{2-q(r+1)}-\frac{\lambda}{(r+1)}\left(\frac{a_1 c(k)}{q}\right)^{r+1}\right)\\
&\leq R^{q(r+1)}\left(\frac{m^*}{2}R^{2-q(r+1)}-\frac{\lambda}{(r+1)}\left(\frac{a_1 c(k)}{q}\right)^{r+1}\right)<0
\end{alignedat}
\end{equation*}
where the last inequality follows by \eqref{raggi}. So we can find a constant $\varepsilon>0$ such that $\mathcal I_\lambda(u)<-\varepsilon$ for any $u\in\mathbb S_k$. Hence $\mathbb S_k\subset\mathcal I_\lambda^{-\varepsilon}$ and by monotonicity of genus and Corollary \eqref{cor}, we get $\gamma(\mathcal I_\lambda^{-\varepsilon})\geq\gamma(\mathbb S_k)=k$.
\end{proof}

\section{Main result}\label{sec main}

Here we define for any $k\in\mathbb N$ the sets
	\[\Gamma_k=\left\{C\subset Z:\,\,C\mbox{ is closed}, C=-C\mbox{ and }\gamma(C)\geq k\right\},
\]
	\[ K_c=\left\{u\in Z:\,\,\mathcal I'_\lambda(u)=0\mbox{ and }\mathcal I_\lambda(u)=c\right\},
\]
and the number $c_k=\inf_{C\in\Gamma_k}\sup_{u\in C}\mathcal I_\lambda(u)$.
Before proving our main result, we state some crucial properties of the family of numbers $\left\{c_k\right\}_{k\in\mathbb N}$.

\begin{lemma}\label{negative}
For any $\lambda>0$ and $k\in\mathbb N$, the number $c_k$ is negative.
\end{lemma}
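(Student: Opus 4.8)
The plan is to show $c_k < 0$ for every $k$ by exhibiting, for each $k$, a symmetric compact set of genus at least $k$ on which $\mathcal I_\lambda$ is strictly negative, and then to combine this with the boundedness from below of $\mathcal I_\lambda$ to conclude $c_k$ is a finite negative number. The key observation is that Lemma \ref{genus} already does the heavy lifting: given $\lambda>0$ and $k\in\mathbb N$ it produces $\varepsilon=\varepsilon(\lambda,k)>0$ with $\gamma(\mathcal I_\lambda^{-\varepsilon})\geq k$, where $\mathcal I_\lambda^{-\varepsilon}=\{u\in Z:\mathcal I_\lambda(u)\leq-\varepsilon\}$. The set $\mathcal I_\lambda^{-\varepsilon}$ is closed (by continuity of $\mathcal I_\lambda$) and symmetric (since $F$ is even in its second variable by \eqref{f1}, and $\|\cdot\|$ is even, so $\mathcal I_\lambda(-u)=\mathcal I_\lambda(u)$). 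Hence $\mathcal I_\lambda^{-\varepsilon}\in\Gamma_k$ and it is an admissible competitor in the infimum defining $c_k$.

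First I would record the symmetry of $\mathcal I_\lambda$ explicitly, so that $\mathcal I_\lambda^{-\varepsilon}$ qualifies as a member of $\Gamma_k$. Next, using $\mathcal I_\lambda^{-\varepsilon}\in\Gamma_k$ as a test set in $c_k=\inf_{C\in\Gamma_k}\sup_{u\in C}\mathcal I_\lambda(u)$, I obtain
	\[c_k\leq\sup_{u\in\mathcal I_\lambda^{-\varepsilon}}\mathcal I_\lambda(u)\leq-\varepsilon<0,\]
since every $u\in\mathcal I_\lambda^{-\varepsilon}$ satisfies $\mathcal I_\lambda(u)\leq-\varepsilon$ by definition. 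This gives the upper bound $c_k\leq-\varepsilon<0$.

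Finally, to guarantee $c_k$ is a genuine (finite) negative number rather than $-\infty$, I would invoke the fact established in Section \ref{sec truncation} that $\mathcal I_\lambda$ is coercive and bounded from below on $Z$: there is a constant $m_\lambda>-\infty$ with $\mathcal I_\lambda(u)\geq m_\lambda$ for all $u\in Z$, whence $c_k\geq m_\lambda$ for every $k$. Combining the two bounds yields $m_\lambda\leq c_k\leq-\varepsilon<0$, proving that $c_k$ is negative. I do not expect any real obstacle here: the entire difficulty was already absorbed into the genus estimate of Lemma \ref{genus}, and the present statement is essentially a direct consequence once one observes that the sublevel set $\mathcal I_\lambda^{-\varepsilon}$ lies in $\Gamma_k$. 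The only point requiring a word of care is confirming the symmetry of $\mathcal I_\lambda$, which follows immediately from \eqref{f1}.
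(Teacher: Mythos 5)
Your proposal is correct and follows essentially the same argument as the paper: take $\mathcal I_\lambda^{-\varepsilon}$ from Lemma \ref{genus} as a competitor in $\Gamma_k$ (noting closedness and evenness of $\mathcal I_\lambda$), bound $c_k\leq-\varepsilon<0$, and use the boundedness from below of $\mathcal I_\lambda$ to rule out $c_k=-\infty$. The only cosmetic difference is that the paper also remarks $\mathcal I_\lambda(0)=0$, so $0\notin\mathcal I_\lambda^{-\varepsilon}$, ensuring the genus of this set is well defined; this is implicit in your use of Lemma \ref{genus}.
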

\begin{proof}
Let $\lambda>0$ and $k\in\mathbb N$.
By Lemma \ref{genus}, there exists $\varepsilon>0$ such that $\gamma(\mathcal I_\lambda^{-\varepsilon})\geq k$. Since also $\mathcal I_\lambda$ is continuous and even, $\mathcal I_\lambda^{-\varepsilon}\in\Gamma_k$. From $\mathcal I_\lambda(0)=0$ we have $0\not\in\mathcal I_\lambda^{-\varepsilon}$. Furthermore $\sup_{u\in\mathcal I_\lambda^{-\varepsilon}}\mathcal I_\lambda(u)\leq-\varepsilon$. In conclusion, remembering also that $\mathcal I_\lambda$ is bounded from below, we get
	\[-\infty<c_k=\inf_{C\in\Gamma_k}\sup_{u\in C}\mathcal I_\lambda(u)\leq\sup_{u\in\mathcal I_\lambda^{-\varepsilon}}\mathcal I_\lambda(u)\leq-\varepsilon<0.
\]
\end{proof}

\begin{lemma}\label{deformation}
Let $\lambda\in(0,\overline{\lambda})$, where $\overline{\lambda}$ is the constant given in Lemma \ref{localpalais}, and let $k\in\mathbb N$. If $c=c_k=c_{k+1}=\ldots=c_{k+l}$ for some $l\in\mathbb N$, then
	\[\gamma(K_c)\geq l+1.
\]
In particular, each $c_k$ is a critical value for $\mathcal I_\lambda$.

\end{lemma}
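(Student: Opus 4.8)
The plan is to prove Lemma~\ref{deformation} using the standard machinery of genus theory combined with a deformation lemma, which is a classical approach for producing multiple critical values of an even functional that is bounded below. The key structural facts already established are: $\mathcal I_\lambda$ is even, bounded from below, coercive, and satisfies the local Palais--Smale condition at every level $c\leq0$ (by Lemma~\ref{localpalais}$(ii)$), and each $c_k<0$ (by Lemma~\ref{negative}). Since all the relevant levels are negative, the Palais--Smale condition is available precisely where we need it.

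First I would record that $K_c$ is compact for every $c<0$: any sequence in $K_c$ is a Palais--Smale sequence at level $c$, hence has a convergent subsequence by the local Palais--Smale condition, and the limit again lies in $K_c$ since $\mathcal I_\lambda$ and $\mathcal I'_\lambda$ are continuous. Because $\mathcal I_\lambda$ is even we have $K_c=-K_c$, and since $c<0=\mathcal I_\lambda(0)$ we have $0\notin K_c$, so $K_c\in\mathcal A$ and its genus is well defined. Compactness lets me invoke the continuity property of the genus: there exists $\delta>0$ such that the closed $\delta$-neighborhood $N_\delta(K_c)$ satisfies $\gamma(\overline{N_\delta(K_c)})=\gamma(K_c)$.

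Next I would argue by contradiction. Suppose $\gamma(K_c)\leq l$. Then $\gamma(\overline{N_\delta(K_c)})\leq l$ for suitable small $\delta$. I would apply the equivariant deformation lemma (valid because the Palais--Smale condition holds at the negative level $c$): there exist $\varepsilon>0$ and an odd homeomorphism $\eta$ of $Z$ such that
\begin{equation*}
\eta\big(\mathcal I_\lambda^{c+\varepsilon}\setminus N_\delta(K_c)\big)\subset\mathcal I_\lambda^{c-\varepsilon},
\end{equation*}
where $\mathcal I_\lambda^{d}=\{u\in Z:\mathcal I_\lambda(u)\leq d\}$. By definition of $c=c_{k+l}$ as an infimum over $\Gamma_{k+l}$, I can choose a set $C\in\Gamma_{k+l}$ with $\sup_{u\in C}\mathcal I_\lambda(u)\leq c+\varepsilon$, i.e. $C\subset\mathcal I_\lambda^{c+\varepsilon}$. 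Then $\overline{C\setminus N_\delta(K_c)}$ is closed and symmetric, and by the subadditivity and monotonicity of the genus,
\begin{equation*}
\gamma\big(\overline{C\setminus N_\delta(K_c)}\big)\geq\gamma(C)-\gamma(\overline{N_\delta(K_c)})\geq(k+l)-l=k.
\end{equation*}
Applying the odd homeomorphism $\eta$ preserves genus, so $\eta(\overline{C\setminus N_\delta(K_c)})\in\Gamma_k$, yet it lies in $\mathcal I_\lambda^{c-\varepsilon}$, forcing $c_k\leq c-\varepsilon<c$. This contradicts $c_k=c$, so $\gamma(K_c)\geq l+1$.

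The main obstacle, and the step requiring the most care, is ensuring the deformation lemma applies: it depends crucially on the local Palais--Smale condition holding \emph{at the specific level} $c$, which is guaranteed only because $c=c_k<0$ by Lemma~\ref{negative}, placing us squarely in the range $c\leq0$ covered by Lemma~\ref{localpalais}$(ii)$. I would also note the degenerate subcase $l=0$, where the statement reduces to $\gamma(K_c)\geq1$, i.e. $K_c\neq\emptyset$; this shows $c_k$ is a critical value and follows from the same deformation argument (if $K_c=\emptyset$ one deforms $\mathcal I_\lambda^{c+\varepsilon}$ below $c-\varepsilon$ directly, contradicting the definition of $c_k$). When $l\geq1$, Proposition~\ref{prop} then guarantees $K_c$ has infinitely many points, which is exactly the leverage needed in the proof of Theorem~\ref{main}.
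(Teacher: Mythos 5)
Your proposal is correct and follows essentially the same route as the paper: the paper also argues by contradiction, takes a closed symmetric neighborhood $U$ of the compact set $K_c$ with $\gamma(U)\leq l$ (your $\overline{N_\delta(K_c)}$ via the continuity property of the genus), applies an odd deformation homeomorphism $\eta$ (citing Benci) with $\eta(\mathcal I_\lambda^{c+\delta}\setminus U)\subset\mathcal I_\lambda^{c-\delta}$ for $\delta\in(0,-c)$, and uses subadditivity and monotonicity of the genus on a near-optimal set $A\in\Gamma_{k+l}$ to contradict the definition of $c_k$. Your additional remarks on the $l=0$ case and on why the Palais--Smale condition is available at the level $c<0$ are consistent with, and slightly more explicit than, the paper's treatment.
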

\begin{proof}
Let $\lambda\in(0,\overline{\lambda})$ and $k,l\in\mathbb N$.
Since from Lemma \ref{negative} we have $c=c_k=c_{k+1}=\ldots=c_{k+l}$ is negative, by Lemma \ref{localpalais} the functional $\mathcal I_\lambda$ satisfies the Palais--Smale condition in $K_c$ and it easily follows that $K_c$ is compact.

If by contradiction $\gamma(K_c)\leq l$, then there exists a closed and symmetric set $U$, with $K_c\subset U$, such that $\gamma(U)\leq r$. Since $c<0$, we can choose $U\subset\mathcal I_\lambda^0$. By \cite[Theorem 3.4]{Be} we have an odd homeomorphism $\eta:Z\rightarrow Z$ such that 
\begin{equation}\label{4.3'}
\eta(\mathcal I_\lambda^{c+\delta}-U)\subset\mathcal I_\lambda^{c-\delta}
\end{equation}
for some $\delta\in(0,-c)$.
So, it follows that $\mathcal I_\lambda^{c+\delta}\subset\mathcal I_\lambda^0$. By definition of $c=c_{k+l}$ there exists $A\in\Gamma_{k+l}$ such that $\sup_{u\in A}\mathcal I_\lambda(u)<c+\delta$, that is $A\subset\mathcal I_\lambda^{c+\delta}$, and by using also \eqref{4.3'}
\begin{equation}\label{4.3}
\eta(A-U)\subset\eta(\mathcal I_\lambda^{c+\delta}-U)\subset\mathcal I_\lambda^{c-\delta}.
\end{equation}
However, by properties of genus (see \cite{K}) we have
	\[\gamma(\overline{A-U})\geq\gamma(A)-\gamma(U)\geq k,
\]
and by using also monotonicity
	\[\gamma(\eta(\overline{A-U}))\geq\gamma(\overline{A-U})\geq k.
\]
Hence, we get $\eta(\overline{A-U})\in\Gamma_k$ which implies
	\[\sup_{u\in\eta(\overline{A-U})}\mathcal I_\lambda(u)\geq c_k=c,
\]
and this fact contradicts \eqref{4.3}.
\end{proof}

\begin{proof}[\bf{Proof of Theorem \ref{main}}]
Let $\overline{\lambda}$ be the constant given in Lemma \ref{localpalais} and let $\lambda\in(0,\overline{\lambda})$. We can consider two cases.

If $-\infty<c_1<c_2<\ldots<c_k<c_{k+1}<\ldots$ where $\left\{c_k\right\}_{k\in\mathbb N}$ are negative, by Lemma \ref{negative}, and critical points for $\mathcal I_\lambda$, by Lemma \ref{deformation}, then by Lemma \ref{localpalais} we have infinitely many critical points for $\mathcal J_\lambda$. Hence, problem \eqref{P} has infinitely many solutions.

If there exist $k,l\in\mathbb N$ such that $c_k=c_{k+1}=\ldots=c_{k+l}=c$, then $\gamma(K_c)\geq l+1\geq2$ by Lemma \ref{deformation}. So, by Proposition \ref{prop} the set $K_c$ has infinitely many points, which are infinitely many critical points for $\mathcal J_\lambda$ by Lemma \ref{localpalais}. Thus again, problem \eqref{P} has infinitely many solutions.
\end{proof}

\section*{Acknowledgments}
The author is supported by {\em Coordena\c c\~ao de Aperfei\c conamento de pessoal de n\'ivel superior} through the fellowship PNPD--CAPES 33003017003P5.
The author is member of the {\em Gruppo Nazionale per l'Analisi Ma\-tema\-tica, la Probabilit\`a e
le loro Applicazioni} (GNAMPA) of the {\em Istituto Nazionale di Alta Matematica ``G. Severi"} (INdAM).


\begin{thebibliography}{99}

\bibitem{AR} {\sc A. Ambrosetti, A. Malchiodi}, Nonlinear analysis and semilinear elliptic problems,
Cambridge Studies in Advanced Mathematics, 14 (2007)

\bibitem{AFP} {\sc G. Autuori, A. Fiscella, P. Pucci}, Stationary Kirchhoff problems involving a fractional elliptic operator and a critical nonlinearity,
preprint, available at {\tt http://arxiv.org/abs/1410.6762}

\bibitem{AP} {\sc J.G. Azozero, I.P. Alonso}, Multiplicity of solutions for elliptic problems with critical exponent or with a nonsymmetric term,
Trans. Amer. Math. Soc. 323, 877--895 (1991)

\bibitem{BS} {\sc B. Barrios, E. Colorado, R. Servadei, F. Soria}, A critical fractional
equation with concave-convex power nonlinearities, to appear in Ann. Inst. H. Poincar\'e Anal. Non Lin\'eaire

\bibitem{Be} {\sc V. Benci}, On critical points theory for indefinite functionals in the presence of symmetric, Trans. Amer. Math. Soc. 274,  533--572 (1982)

\bibitem{Bo} {\sc V.I. Bogachev}, Measure Theory,
vol. II,  xiv+575 pp, Springer--Verlag, Berlin  (2007)

\bibitem{B} {\sc H. Br\'ezis}, Functional analysis, Sobolev spaces and partial differential equations, xiv+599 pp.,
{\em Universitext}, Springer, New York (2011)

\bibitem{C} {\sc A. Castro}, M\'etodos Variacionales en An\'alisis Functional no Linear, monograph published by the Colombian Math. Soc. (1980)

\bibitem{CC} {\sc F.J.S.A. Corr\^ea, A.C.R. Costa}, On a $p(x)$-Kirchhoff equation with critical exponent and an additional nonlocal term via truncation argument, Math. Nachr. (2015) doi:10.1002/mana.201400198

\bibitem{VP} {\sc E. Di Nezza, G. Palatucci, E. Valdinoci}, Hitchhiker's guide to the fractional Sobolev spaces,
B. Sci. Math. 136, 521--573 (2012)

\bibitem{FBS} {\sc G.M. Figueiredo, G. Molica Bisci, R. Servadei}, On a fractional Kirchhoff-type equation via Krasnoselskii's genus, preprint

\bibitem{FS} {\sc G.M. Figueiredo, J.R. Santos Junior}, Multiplicity of solutions for a Kirchhoff equation with subcritical or critical growth, Differential Integral Equations 25, 853–868 (2012)


\bibitem{F} {\sc A. Fiscella}, Saddle point solutions for non--local elliptic operators, Topol. Methods Nonlinear Anal. 44, 527--538 (2014)

\bibitem{FV} {\sc A. Fiscella, E. Valdinoci},  A critical Kirchhoff type problem involving a nonlocal operator, Nonlinear Anal. 94, 156--170 (2014)

\bibitem{K} {\sc M.A. Krasnoselskii},  Topological methods in the theory of nonlinear integral equations, Mac Millan, New York (1964)

\bibitem{PP} {\sc G. Palatucci, A. Pisante}, Improved Sobolev embeddings, profile decomposition, and concentration--compactness for
fractional Sobolev spaces, Calc. Var. Partial Differential Equations  50, 799--829 (2014)

\bibitem{sv3} {\sc R. Servadei, E. Valdinoci}, The Brezis--Nirenberg result for the fractional Laplacian, Trans. Amer. Math. Soc. 367, 67--102 (2015)

\bibitem{sv2} {\sc R. Servadei, E. Valdinoci}, Fractional Laplacian equations with critical Sobolev exponent, Rev. Mat. Complut. (2015) doi:10.1007/s13163-015-0170-1

\end{thebibliography}
\end{document}